\documentclass[12pt,reqno]{amsart}
\usepackage{amsmath,amsfonts,amsthm,amssymb,color}
\usepackage[usenames,dvipsnames]{xcolor}
\usepackage[T1]{fontenc}
\usepackage{enumerate}
\usepackage{hyperref}
\hypersetup{
     colorlinks   = true,
     citecolor    = blue,
     linkcolor    = blue
}

\usepackage{pdfsync}{\tiny }

\usepackage[left=1in, right=1in, top=1.1in,bottom=1.1in]{geometry}
\setlength{\parskip}{3.5pt}

\numberwithin{equation}{section}

\newcommand{\E}{\mathbb{E}}
\newcommand{\R}{\mathbb{R}}
\newcommand{\nT}{\lfloor nT \rfloor}
\newcommand{\mT}{\lfloor mT \rfloor}
\newcommand{\nt}{\lfloor nt \rfloor}

\newcommand{\ns}{\lfloor ns \rfloor}
\newcommand{\mti}{\lfloor m t_i \rfloor}
\newcommand{\nti}{\lfloor nt_i \rfloor}

\newtheorem{thm}{Theorem}[section]
\newtheorem{lemma}[thm]{Lemma}

\newtheorem{prop}[thm]{Proposition}

\def\1{{\rm l}\hskip -0.21truecm 1}

\begin{document}

\title{Weak symmetric integrals with respect to the fractional Brownian motion}

 \date{\today}

\author{Giulia Binotto \and Ivan Nourdin \and David Nualart}

\address{Giulia Binotto: {\color{black}{Facultat de Matem\`atiques, Universitat de Barcelona, Gran Via de les Corts Catalanes 585, 08007 Barcelona, Spain.}} }
\email{gbinotto@ub.edu}
\thanks{G. Binotto was supported by the grant MTM2012-31192 from SEIDI, Ministerio de Economia y Competividad}

\address{Ivan Nourdin: \textcolor{black}{Rmath, Fstc, Luxembourg University, 6 rue Richard Coudenhove-Kalergi,
L-1359 Luxembourg}.}
\email{ivan.nourdin@uni.lu}
\thanks{I. Nourdin was partially supported  by the Grant F1R-MTH-PUL-15CONF (CONFLUENT) at Luxembourg University}

\address{David Nualart: Department of Mathematics, University of Kansas,   Lawrence, KS 66045, USA.}
\email{nualart@ku.edu}
\thanks{D. Nualart was supported by the NSF grant  DMS1512891 and the ARO grant FED0070445}

\subjclass[2010]{60G05; 60H07; 60G15; 60F17}

\date{\today}

\keywords{Fractional Brownian motion. Stratonovich integrals. Malliavin calculus. It\^o formula in law.}

\begin{abstract}
 The aim of this paper is to establish the weak convergence, in the topology of the Skorohod space, of the $\nu$-symmetric Riemann sums for functionals of the fractional Brownian motion \textcolor{black}{when the Hurst parameter takes the critical value $H=(4\ell+2)^{-1}$, where $\ell=\ell(\nu)\geq 1$ is the largest natural number satisfying $\int_0^1 \alpha^{2j}\nu(d\alpha)=(2j+1)^{-1}$ for all $j=0,\ldots,\ell-1$.} As a consequence, we derive a change-of-variable formula in distribution, where the correction term is a stochastic integral with respect to a Brownian motion that is independent of the fractional Brownian motion.
\end{abstract}

\maketitle

\section{Introduction}\label{Intro}
Suppose that $B^H= \{ B^H_t, t\ge 0\}$ is a  fractional Brownian motion (fBm) with Hurst parameter $H\in (0,1)$, that is, $B^H$ is a centered
Gaussian process with covariance given by
    \begin{equation}\label{cov}
      R(s,t) := \E[B^H_s B^H_t] = \frac12 (s^{2H}+t^{2H}-|t-s|^{2H}),
    \end{equation}
    for any $s,t \ge 0$. \textcolor{black}{When $H<\frac12$,} it is well-known that the integral $\int_0^t g(B^H_s) dB^H_s$  does not exist \textcolor{black}{in general} as a path-wise Riemann-Stieltjes integral. In the pioneering work \cite{GNRV}, Gradinaru, Nourdin, Russo and Vallois proved that this  integral can be defined  as the limit in probability of  suitable symmetric Riemann sums if the Hurst parameter is not too small. Let us briefly describe the main  contribution of \cite{GNRV}.

  Let $\nu$ be a symmetric  probability measure on $[0,1]$, \textcolor{black}{meaning that} $\nu(A) =\nu(1-A)$ for any Borel set $A\subset [0,1]$.  Given a  continuous function $g:\R \rightarrow \R$,  consider   the $\nu$-symmetric  Riemann sums of $g(B^H_s)$ in the interval $[0,t]$  given by
  \[
  S^\nu_n(g,t)=   \sum_{j=0}^{\nt-1}  (B^H_{\frac {j+1} n}-B^H_{\frac jn}) \int_0^1 g\left (B^H_{\frac jn}+\alpha(  B^H_{\frac {j+1} n}-B^H_{\frac jn})\right) \nu(d\alpha),
  \]
  where $n\ge 1$ is an integer and $\lfloor x\rfloor$ denotes the integer part of $x$ for any $x\ge 0$.
  Then, following  \cite{GNRV} \textcolor{black}{and providing the limit exists}, the $\nu$-symmetric integral is defined  as the limit in probability of the $\nu$-symmetric Riemann sums as $n$ tends to infinity, namely,
  \[
    \int_0^t g(B^H_s) d^\nu B^H_s = \lim_{n\rightarrow \infty}  S^\nu_n(g,t).
     \]
 It is  proved in  \cite{GNRV} that  this integral exists for $g=f'$ with
 $f\in \mathcal{C}^{4\ell(\nu)+2}(\R)$, \textcolor{black}{if} the Hurst parameter satisfies $H>\frac 1 {4\ell(\nu)+2}$. Here we denote by  $\ell(\nu)\ge 1$ the \textcolor{black}{largest} positive natural number such that
    \begin{eqnarray}\label{ell}
       \int_0^1\alpha^{2j} \nu(d\alpha) = \frac{1}{2j+1} \qquad \forall j=\textcolor{black}{0,1},\dots,\ell(\nu)-1.
    \end{eqnarray}
 Moreover, \textcolor{black}{in this case}    the integral $\int_0^t f'(B^H_s)  d^\nu B^H_s$  satisfies the chain rule
        \[ f(B^H_t) = f(0) + \int_0^t f'(B^H_s) d^\nu B^H_s. \]
  Basic examples of $\nu$-symmetric Riemann sums and integrals are the following:
  \begin{itemize}
  \item[(i)] If  $\nu= \frac 12 \delta_0 + \frac 12 \delta_1$, then $S^\nu_n$ are the trapezoidal Riemann sums. In  this case $\ell(\nu)=1$ and $\nu$-symmetric integrals exist for $H>\frac 16$.
  \item[(ii)] If  $\nu= \frac 16 \delta _0 + \frac 23 \delta_{1/2} +\frac 16 \delta_1$, then $\ell(\nu)= 2$. In this case, $S^\nu_n$ are the Simpson Riemann sums and  $\nu$-symmetric integrals exist for $H>\frac 1{10}$.
  \item[(iii)] If $\nu$ is the  Lebesgue measure, then $\ell(\nu)=\infty$, and  $\nu$-symmetric integrals exist for any $H\in(0,1)$.
  \end{itemize}

  The lower bound  $\frac 1 {4 \ell(\nu)+2}$ for the Hurst parameter is sharp, in the sense that for $H= \frac 1 {4 \ell(\nu)+2}$ the $\nu$-symmetric integral diverges in $L^2(\Omega)$ for $f(x)=x^2$. This has \textcolor{black}{been} proved, for  the example (i) above, in the references \cite{ChN} and \cite{GNRV}.  The goal of this paper is to show that when $H=\frac 1 {4 \ell(\nu)+2}$, the $\nu$-symmetric Riemann sums converge in distribution and, as a consequence, we obtain a change-of-variable formula  in law with a correction term which is an It\^o stochastic integral with respect to a Brownian motion which is independent of $B^H$.  More precisely, the main result is the following theorem.

  We say that a function  $f:\R\rightarrow\R$  has \textit{moderate growth} if there exist positive constants $A$, $B$  and $\alpha <2$ such that $|f(x)|\leq Ae^{B|x|^\alpha}$ \textcolor{black}{for all $x\in\R$}.

    \begin{thm}  \label{thm1}
  Fix a symmetric  probability measure $\nu$ on $[0,1]$ with $\ell:=\ell(\nu)<\infty$ and let  $B^H=\{B^H_t,t\geq0\}$ be a fractional Brownian motion with Hurst parameter $H=\frac{1}{4\ell+2}$. Consider a function  $f\in \mathcal{C}^{20\ell+5}(\R)$ such that $f$ and its derivatives up to the order  $20\ell+5$ have moderate growth. Then,
        \begin{equation} \label{E1}
     S_n^\nu(f',t) \,\mathrel{\mathop{\longrightarrow}^{\mathrm{\mathcal{L}}}_{\mathrm{n\rightarrow\infty}}} \,      f(B^H_t)-f(0)- c_\nu \int_0^t f^{(2\ell+1)}(B^H_s) \,dW_s,
         \end{equation}
      where $W=\{W_t, t\geq0\}$ is a Brownian motion independent of $B^H$, $c_\nu$ is a constant depending \textcolor{black}{only} on $\nu$, and the convergence holds in the topology of the Skorohod space $D([0,\infty))$.
    \end{thm}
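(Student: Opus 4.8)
The plan is to isolate a single weighted odd-power variation of $B^H$ and to prove a stable limit theorem for it by Malliavin calculus. Writing $x_j=B^H_{j/n}$ and $h_j=B^H_{(j+1)/n}-B^H_{j/n}$, I would start from the exact telescoping identity
\begin{equation*}
S_n^\nu(f',t)=f\big(B^H_{\nt/n}\big)-f(0)+\sum_{j=0}^{\nt-1}A_j,\qquad A_j:=h_j\int_0^1 f'(x_j+\alpha h_j)\,\nu(d\alpha)-\big(f(x_j+h_j)-f(x_j)\big).
\end{equation*}
Since $f(B^H_{\nt/n})\to f(B^H_t)$, the whole problem reduces to the asymptotics of $\sum_j A_j$. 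A Taylor expansion gives $A_j=\sum_{k\ge1}\frac{c_k}{k!}f^{(k)}(x_j)h_j^{k}$ with $c_k=k\int_0^1\alpha^{k-1}\nu(d\alpha)-1$; the symmetry of $\nu$ together with the moment conditions \eqref{ell} forces $c_1=\cdots=c_{2\ell}=0$, while the maximality of $\ell$ guarantees $c_{2\ell+1}\ne0$. Hence the candidate leading term is the odd-power variation $\frac{c_{2\ell+1}}{(2\ell+1)!}\sum_j f^{(2\ell+1)}(x_j)h_j^{2\ell+1}$.

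The first genuine difficulty is that the higher Taylor terms cannot be handled separately: for even $k$ with $2\ell+2\le k\le 4\ell$ the sums $\sum_j f^{(k)}(x_j)h_j^{k}$ have expectations of order $n^{1-k/(4\ell+2)}\to\infty$, and the odd terms produce, through the nonzero correlations $\E[x_j h_j]=O(n^{-2H})$, trace contributions of the same divergent orders. The correct route is to expand every sum by Malliavin calculus into an iterated Skorohod integral plus lower-order trace terms, and then to check that the symmetry relations among the coefficients $c_k$ make all of these divergent pieces cancel, leaving no deterministic drift. The first such relation is $c_{2\ell+2}=(\ell+1)\,c_{2\ell+1}$, reflecting that for a symmetric measure $\int_0^1\alpha^{2\ell+1}\nu(d\alpha)$ is determined by $\int_0^1\alpha^{2\ell}\nu(d\alpha)$; analogous identities at the higher even orders cancel the remaining divergences. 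Carrying out this bookkeeping, and showing that every genuine remainder term is negligible in $L^2$, reduces the statement to the convergence of the mean-zero part of the weighted $(2\ell+1)$-power variation.

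To analyse this part I would exploit that at the critical value $(2\ell+1)H=\tfrac12$, whence $\big(\E[h_j^2]\big)^{(2\ell+1)/2}=n^{-1/2}$, so that the object sits exactly at the central-limit scaling. Normalising $h_j=n^{-H}\xi_j$ with $\xi_j$ standard Gaussian and expanding $\xi_j^{2\ell+1}=\sum_{m=0}^{\ell}b_m H_{2m+1}(\xi_j)$ in Hermite polynomials, the surviving contribution is
\begin{equation*}
\Phi_n(t):=\frac{c_{2\ell+1}}{(2\ell+1)!}\,n^{-1/2}\sum_{m=1}^{\ell}b_m\sum_{j=0}^{\nt-1}f^{(2\ell+1)}(x_j)\,H_{2m+1}(\xi_j),
\end{equation*}
the $m=0$ piece being part of the cancelling traces above (here one uses that $\sum_k\rho(k)=0$ for the fractional noise when $H<\tfrac12$, where $\rho(k)=\E[\xi_0\xi_k]$). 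Each inner sum I would write as a multiple Skorohod integral $\delta^{2m+1}(\,\cdot\,)$; the relevant covariance series $\sum_k\rho(k)^{2m+1}$ converge, each sitting precisely at its borderline of summability at the critical exponent.

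The heart of the proof is then to show that $(B^H,\Phi_n)$ converges in the topology of $D([0,\infty))$ to $\big(B^H,\,c_\nu\int_0^\cdot f^{(2\ell+1)}(B^H_s)\,dW_s\big)$ with $W$ a Brownian motion independent of $B^H$. I would obtain this as a stable limit theorem through the Malliavin-calculus criteria for convergence to a conditionally Gaussian limit: (i) tightness, via uniform moment bounds on the increments of $\Phi_n$ and the Kolmogorov criterion; (ii) convergence of the conditional variance, $\E\big[(\Phi_n(t)-\Phi_n(s))^2\mid B^H\big]\to c_\nu^2\int_s^t f^{(2\ell+1)}(B^H_u)^2\,du$, where $c_\nu^2$ is the explicit combination of the $b_m$ and the series $(2m+1)!\sum_k\rho(k)^{2m+1}$; and (iii) asymptotic independence of the limit from $B^H$, checked by showing that the contractions of the kernels of $\Phi_n$, and their inner products against the directions generating the first chaos, tend to $0$ in $L^2$. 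Step (iii), together with the fourth-moment/contraction estimates needed in (ii), is the principal obstacle: it demands sharp control of iterated sums of $\rho$ exactly at the critical exponent, where several of these sums are only borderline summable. Combining finite-dimensional stable convergence with tightness then gives convergence in $D([0,\infty))$; conditionally on $B^H$ the limit is a centred Gaussian process with the covariance structure identified in (ii), hence equal in law to $c_\nu\int_0^\cdot f^{(2\ell+1)}(B^H_s)\,dW_s$ for an independent Brownian motion $W$, which is \eqref{E1}.
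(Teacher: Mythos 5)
Your high-level architecture (reduce to a single weighted $(2\ell+1)$-power variation, then prove a stable central limit theorem for it via Malliavin calculus, with tightness plus finite-dimensional convergence) parallels the paper, and your CLT step is a legitimately different implementation: the paper freezes the weight over big blocks, invokes the known stable convergence \eqref{er1} of $\sum_j(\Delta_j^nB^H)^{2\ell+1}$, and controls the block error by an $L^2$ estimate proved with Malliavin integration by parts (Lemma \ref{lem1}), whereas you propose to re-prove a weighted Breuer--Major theorem at criticality directly through the multiple-Skorohod-integral criteria (conditional variance plus vanishing contractions), as in \cite{HN3} for fixed $t$. That route is viable but strictly heavier, and one caveat: conditioning on $B^H$ is literally vacuous here since $\mathcal{F}=\sigma(B^H)$; the criterion you want is convergence in probability of the Malliavin bracket $\langle u_n, D^{2m+1}\Phi_n\rangle_{\mathfrak{H}^{\otimes(2m+1)}}$ to $c^2\int_s^t f^{(2\ell+1)}(B^H_u)^2du$, not a literal conditional expectation.

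The genuine gap is in your reduction step, and it is twofold. First, your cancellation analysis stops at the wrong order. You flag the even Taylor terms $2\ell+2\le k\le 4\ell$ as divergent and assert that symmetry identities among the $c_k$ remove them; but the borderline contributions with $k+p=4\ell+2$, where $p$ counts pairings of increments against the weight through $\langle \varepsilon_{\frac jn},\partial_{\frac jn}\rangle_{\mathfrak{H}}=O(n^{-2H})$, are $O(1)$, not $o(1)$. For instance the $k=4\ell+2$ term alone converges to a nonzero \emph{random drift} proportional to $\int_0^t f^{(4\ell+2)}(B^H_s)\,ds$, and matching $O(1)$ trace contributions arise from every odd $k$ between $2\ell+1$ and $4\ell+1$ (for odd $k$ the parity forces an odd number $p\ge1$ of weight-pairings, so $k+p$ is even and can hit $4\ell+2$). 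These must cancel \emph{exactly}, which requires the full family of identities $\sum_m\binom{K}{m}c_{K-m}(-\tfrac12)^m=0$ for every even $K$ up to $4\ell+2$ -- identities that do hold by symmetry of $\nu$, but which your sketch neither states nor proves, and which go beyond "cancelling the divergent pieces, leaving no deterministic drift". Second, your remainder cannot be dismissed as "negligible in $L^2$": at $H=\frac1{4\ell+2}$ one has $\sum_{j=0}^{\nt-1}|\Delta_j^nB^H|^{4\ell+2}=O_P(1)$, so any remainder merely bounded by $C|\Delta_j^nB^H|^{4\ell+2}$ does not vanish; one needs the remainder in the form $(b-a)^{4\ell+2}C(a,b)$ with $C$ continuous and $C(a,a)=0$, so that $\sup_{|t-s|\le 1/n}|C(B^H_s,B^H_t)|\to0$ kills it, as in \eqref{e2}--\eqref{e3}. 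Both difficulties are exactly what the paper sidesteps by using the GNRV identity \eqref{e1}: expanding at the midpoint $\frac{a+b}{2}$ against a symmetric $\nu$ makes every even power of $(b-a)$ vanish identically, leaves only the odd terms $\Phi_n^h$, $h=\ell,\dots,2\ell$ (the higher ones dying by the fourth-moment bound of Lemma \ref{lemphi}), and supplies the remainder in the required modulus-of-continuity form. Your left-endpoint expansion forces you to reconstruct all of this by hand, and as written the reconstruction is incomplete precisely at the $O(1)$ level, where an uncancelled term would contradict the statement of the theorem.
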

    The value of the constant $c_\nu$ in  (\ref{E1}) is $c_\nu = k_{\nu,\ell} \sigma_\ell$, where $k_{\nu,\ell}$ is defined in (\ref{kanu}) and $\sigma_\ell$ is given by
    \begin{equation} \label{sigma}
    \sigma_\ell^2=1+ 4^{-\ell} \sum_{j=1} ^\infty \left( (j+1)^{ \frac 1{2\ell+1}} +(j-1)^{ \frac 1{2\ell+1}} -2j^{ \frac 1{2\ell+1}} \right)^{2\ell+1} .
    \end{equation}

  The statement of Theorem \ref{thm1} can also be written as the following change-of-variables formula in law:
  \[
    f(B^H_t)= f(0) + \int_0^t f'(B^H_s) d^\nu B^H_s +  c_\nu \int_0^t f^{(2\ell+1)}(B^H_s) \,dW_s.
    \]

    Some particular cases have already been addressed recently in the literature.
  In the  case $\nu=  \frac 12 \delta_0 + \frac 12 \delta_1$ (trapezoidal Riemann sums), the critical value is $H=\frac 16$, and  the \textcolor{black}{corresponding version of Theorem \ref{thm1}} was proved by
  Nourdin, R\'eveillac and Swanson in  \cite{NRS}.  The convergence results for trapezoidal Riemann sums were extended to a general class of Gaussian processes by Harnett and  Nualart in \cite{HN1}.
  If $\nu=\frac16\delta_0+\frac23\delta_{\frac12}+\frac16\delta_1$ (Simpson Riemann sums),  the critical point is $H=\frac{1}{10}$ and the convergence in law for any fixed time $t\ge 0$ was proved by Harnett and Nualart in \cite{HN3}.

  For related results in the case of midpoint Riemann sums, we refer \textcolor{black}{to the works by Burdzy and Swanson  \cite{BS}, Nourdin and R\'eveillac \cite{NR} and Harnett and Nualart \cite{HN2}}.  In this case the critical value of the Hurst parameter is $H=\frac 14$, and the complementary term in the  It\^o formula involves a second derivative.
  \textcolor{black}{See also Nourdin \cite{NourdinJFA}.}

Let us \textcolor{black}{briefly} describe  the main ideas in the proof of  Theorem \ref{thm1}. First, using a decomposition of  $f(B^H_t)-f(0)$ based on Taylor's formula and the properties of the symmetric measure $\nu$, derived in \cite{GNRV}, it turns out that the only nonzero contribution to the limit in law of the $\nu$-symmetric Riemann sums  $ S_n^\nu(f',t) $ is the term
\begin{equation} \label{ws}
 \sum_{j=0}^{\nt-1}      \,f^{(2\ell+1)}(\widetilde{B}^H_{\frac jn }) (\Delta_j^n B^H)^{2\ell+1},
 \end{equation}
 where we used the notation   $\widetilde{B}^H_{\frac jn}=\frac12\big(B^H_{\frac jn}+B^H_{\frac {j+1}n}\big)$ and $\Delta_j^nB^H=B^H_{\frac {j+1} n}-B^H_{\frac jn}$.
 This term is a weighted sum of the odd powers $(\Delta_j^n B^H)^{2\ell+1}$ of the fBm. It is well-known that for $H=\frac{1}{4\ell+2}$, the sums of these odd powers converge in law to a Gaussian random variable. More precisely, the following stable convergence holds
     \begin{equation}  \label{er1}
 \left(    \sum_{j=0}^{\nt -1} \big( \Delta _j^n B^H\big)^{2\ell+1} , B^H_t , t\ge 0\right)\,\mathrel{\mathop{\longrightarrow}^{\mathrm{\mathcal{L}}}_{\mathrm{n\rightarrow\infty}}} \, \left( \sigma_\ell  W_t, B^H_t, t\ge 0 \right),
 \end{equation}
 where $\sigma _\ell$ is defined in (\ref{sigma}) and  in the right-hand side, the \textcolor{black}{process} $W$ is a Brownian motion  independent of $B^H$. The proof of the convergence for a fixed $t$ follows from the \textcolor{black}{Breuer-Major} Theorem
 (we refer to  \cite[Chapter 7]{NouPecBook}   and \cite{CNW} for a proof of this result  based on the Fourth Moment theorem). Then the convergence of the weighted sums (\ref{ws}) follows from the methodology of  small blocks/big blocks  used, for instance in the works \cite{CNW} and \cite{CNP}. However, unlike these references, the convergence to zero of the reminder term cannot be established using fractional calculus techniques because $H<\frac 12$, and it requires the application of integration-by-parts formulas from Malliavin calculus.

 The paper is organized as follows. Section 2 contains some preliminaries on the Malliavin calculus and the fractional Brownian motion.  Section 3 is devoted to the proof of Theorem \ref{thm1} and in Section 4 we show two basic technical lemmas.

\section{Preliminaries}

  In the next two subsections, we discuss some notions of Malliavin calculus and fractional Brownian motion. Throughout the paper $C$ will denote any positive constant, which may change from one expression to another.

\subsection{Elements of Malliavin Calculus}\label{MalliavinCalc}

  Let $\mathfrak{H}$ be a real separable infinite-dimensional Hilbert space and let $X=\{X(h),\,h\in\mathfrak{H}\}$ be an isonormal Gaussian process over $\mathfrak{H}$. This means that $X$ is a centered Gaussian family, defined on a complete probability space $(\Omega ,\mathcal{F},P)$, with a covariance structure given by
    \begin{equation*}
      \E[X(h)X(g)] = \langle h,g \rangle_\mathfrak{H}, \quad h,g\in \mathfrak{H}.
    \end{equation*}
  We assume that $\mathcal{F}$ is the $\sigma$-algebra generated by $X$.

  For any integer $q\geq 1$, let $ \mathfrak{H}^{\otimes q}$ and $\mathfrak{H}^{\odot q}$ denote, respectively, the $q$th tensor product and the $q$th symmetric tensor product of $\mathfrak{H}$.

  Let $\{e_{n},\,n\geq 1\}$ be a complete orthonormal system in $\mathfrak{H}$. Given $f\in \mathfrak{H}^{\odot p}$, $g\in\mathfrak{H}^{\odot q}$ and $r\in\{0,\ldots ,p\wedge q\}$, the $r$th-{\it order contraction} of $f$ and $g$ is the element of $\mathfrak{H}^{\otimes(p+q-2r)}$ defined by
    \begin{equation}\label{contract_oper}
      f\otimes _{r}g=\sum_{i_{1},\ldots ,i_{r}=1}^{\infty }\langle f,e_{i_{1}}\otimes \ldots \otimes e_{i_{r}}\rangle _{\mathfrak{H}^{\otimes r}}\otimes \langle g,e_{i_{1}}\otimes \ldots \otimes e_{i_{r}}\rangle _{\mathfrak{H}^{\otimes r}},
    \end{equation}
  where $f\otimes _{0}g=f\otimes g$ and, for $p=q$, $f\otimes _{q}g=\langle f,g\rangle _{\mathfrak{H}^{\otimes q}}$. Notice that $f\otimes _{r}g$ is not necessarily symmetric. We denote its symmetrization by $f\widetilde{\otimes }_{r}g\in\mathfrak {H}^{\odot (p+q-2r)}$.

  Let $\mathcal{H}_{q}$ denote the $q$th {\it Wiener chaos} of $X$, that is, the closed linear subspace of $L^{2}(\Omega)$
generated by the random variables $\{H_{q}(X(h)),h\in\mathfrak{H},\left\|h\right\| _{\mathfrak{H}}=1\}$, where $H_{q}$ is the $q$th Hermite polynomial defined by
            \begin{equation}  \label{Hermite}
       H_q(x)=(-1)^q e^{x^2/2}\frac{d^q}{dx^q}\big(e^{-x^2/2}\big).
       \end{equation}

  For $q\geq 1$, let $I_q(\cdot)$ denote the generalized Wiener-It\^o multiple stochastic integral. It is known that the map
    \begin{equation}\label{WI_stoch_int}
      I_{q}(h^{\otimes q}) = H_{q}(X(h))
    \end{equation}
  provides a linear isometry between $\mathfrak{H}^{\odot q}$ (equipped with the modified norm $\sqrt{q!}\left\|\cdot\right\|_{\mathfrak{H}^{\otimes q}}$) and $\mathcal{H}_{q}$ (equipped with the $L^2(\Omega)$ norm). For $q=0$, we set by convention $\mathcal{H}_{0}=\R$ and $I_{0}$ equal to the identity map.

   Multiple stochastic integrals satisfy the following product formula.
      Let $p,q\geq1$ positive integers. Let $f\in\mathfrak{H}^{\odot p}$ and $g\in\mathfrak{H}^{\odot q}$. Then,
        \begin{equation} \label{prod}
         I_p(f) I_q(g) = \sum_{z=0}^{p\wedge q} z!{p\choose z}{q\choose z}  I_{p+q-2z}(f\widetilde{\otimes}_z g),
         \end{equation}
      where $\otimes_z$ is the contraction operator defined in (\ref{contract_oper}).

  From the hypercontractivity property of the Ornstein-Uhlenbeck semigroup, it is well-known that all $L^r(\Omega)$-norms, $r>1$, are equivalent on each Wiener chaos. In particular, for any real numberr $r\ge 2$, any integer $p\ge 2$ and any $f\in\mathfrak{H}^{\odot p}$ , we have
  \begin{equation} \label{j5}
  \| I_p(f) \|_{L^r(\Omega)} \le C_{r,p} \| I_p(f) \|_{L^2(\Omega)} = C_{r,p}  \sqrt{p!} \| f\| _{\mathfrak{H} ^{\otimes p}}.
  \end{equation}

  Let $\mathcal{S}$ be the set of all smooth and cylindrical random variables of the form
    \begin{equation*}
      F = g \left(X(\phi_1),\dots,X(\phi_n)\right),
    \end{equation*}
  where $n\geq 1$, $g:\R^n\rightarrow\R$ is an infinitely differentiable function with compact support, and $\phi_i\in\mathfrak{H}$. The {\it Malliavin derivative} of $F$ with respect to $X$ is the element of $L^{2}(\Omega;\mathfrak{H})$ defined as
    \begin{equation*}
      DF = \sum_{i=1}^n \frac{\partial g}{\partial x_i} \left(X(\phi_1),\dots,X(\phi_n)\right) \phi_i.
    \end{equation*}
  By iteration, we can define the $q$th derivative $D^{q}F$ for every $q\geq2$, which is an element of $L^{2}(\Omega;\mathfrak{H}^{\odot q})$.

  For any integer $q\ge 1$ and any real number $p\geq1$, let ${\mathbb{D}}^{q,p}$ denote the closure of $\mathcal{S}$ with respect to the norm $\|\cdot \|_{\mathbb{D}^{q,p}}$, defined as
    \begin{equation*}
      \|F\|_{\mathbb{D}^{q,p}}^{p} = \E\big[|F|^{p}\big] + \sum_{i=1}^{q} \E\left(\|D^{i}F\|_{\mathfrak{H}^{\otimes i}}^{p}\right).
    \end{equation*}
  More generally, for any Hilbert space $V$, we denote by $\mathbb{D}^{q,p}(V)$ the corresponding Sobolev space of $V$-valued random variables.

  The Malliavin derivative $D$ fulfills the following chain rule. If $\varphi:\R^n\rightarrow\R$ is continuously differentiable with bounded partial derivatives and if $F=(F_1,\dots,F_n)$ is a vector of elements of $\mathbb{D}^{1,2}$, then $\varphi(F)\in\mathbb{D}^{1,2}$ and
    \begin{equation*}
      D\varphi(F) = \sum_{i=1}^n \frac{\partial\varphi}{\partial x_i}(F)DF_i.
    \end{equation*}

  We denote by $\delta$ the {\it Skorohod integral}, also called the {\it divergence operator}, which is the adjoint of the operator $D$.  More precisely, a random element $u\in L^{2}(\Omega;\mathfrak{H})$ belongs to the domain of $\delta$, denoted by Dom $\delta$, if and only if, for any $F\in\mathbb{D}^{1,2}$,  we have
    \begin{equation*}
      \big| \E\big( \langle DF,u\rangle_{\mathfrak{H}} \big) \big| \leq c_u \,\|F\|_{L^2(\Omega)},
    \end{equation*}
  where $c_u$ is a constant depending only on $u$. If $u\in\mbox{Dom }\delta$, then the random variable $\delta(u)$ is defined by the duality relationship
    \begin{equation*}
      \E(F\delta(u)) = \E\big( \langle DF,u\rangle_{\mathfrak{H}} \big).
    \end{equation*}
   This is called the Malliavin integration by parts formula and it holds for every $F\in {\mathbb{D}}^{1,2}$. For $q\geq1$, the multiple Skorohod integral is defined iteratively as $\delta^q(u)=\delta(\delta^{q-1}(u))$, with $\delta^0(u)=u$. From this definition, we have
    \begin{equation}\label{dual}
      \E\big( F\delta^q(u) \big) = \E\big( \left\langle D^qF,u\right\rangle_{\mathfrak{H}^{\otimes q}} \big),
    \end{equation}
  for any $u\in\mbox{Dom }\delta^q$ and any $F\in\mathbb{D}^{q,2}$. Moreover, $\delta^q(h)=I_q(h)$ for any $h\in\mathfrak{H}^{\odot q}$.     We refer to \cite{Nbook} for a detailed account on the Malliavin calculus for an arbitrary isonormal Gaussian process.

\subsection{Fractional Brownian motion}

  Let $B^H=\{B^H_t, t\ge 0\}$ denote a fractional Brownian motion with Hurst parameter $H$. Namely, $B^H$ is a centered Gaussian process, defined on a complete  probability space
  $(\Omega, \mathcal{F},P)$ with covariance given by (\ref{cov}).
We assume that $\mathcal{F}$ is generated by $B^H$. Along the paper we suppose that $H< \frac 12$.

  We denote by $\mathcal{E}$ the set of $\R$-valued step functions on $[0, \infty)$. Let $\mathfrak{H}$ be the Hilbert space defined as the closure of $\mathcal{E}$ with respect to the scalar product
    \begin{equation*}
      \left\langle {\mathbf{1}}_{[0,t]},{\mathbf{1}}_{[0,s]}\right\rangle _{\mathfrak{H}} = R(s,t).
    \end{equation*}
  The mapping $\1_{[0,t]}\rightarrow B^H_t$ can be extended to a linear isometry between the Hilbert space $\mathfrak{H}$ and the Gaussian space spanned by $B^H$. In this way $\{B^H(h),h\in\mathfrak{H}\}$ is an isonormal Gaussian process as in Section \ref{MalliavinCalc}.

Recall the notation $\widetilde{B}^H_{\frac jn}=\frac12\big(B^H_{\frac jn}+B^H_{\frac {j+1}n}\big)$ and $\Delta_j^nB^H=B^H_{\frac {j+1} n}-B^H_{\frac jn}$.  Moreover, we  set
 \begin{eqnarray*}
\partial_{\frac jn }&=& \1_{[\frac jn , \frac {j+1} n]}, \\
\varepsilon_t &=& \1_{[0,t]}
\end{eqnarray*}
and
\[
\widetilde{\varepsilon}_{\frac jn  }=\frac12\big(\varepsilon_{\frac  jn }+\varepsilon_{\frac {j+1} n}\big)  =\frac12\big(\1_{[0,\frac jn ]}+\1_{[0,\frac {j+1} n]}\big).
\]

  The  fractional Brownian motion  with Hurst parameter $H$ satifies
  \begin{equation}
\E\big[(\Delta_j^ nB^H)^2\big]=\langle\partial_{\frac jn },\partial_{\frac jn }\rangle_\mathfrak{H}=n^{-2H}. \label{prop_fBM1}
\end{equation}
Moreover,  using the fact that the function $x\rightarrow x^{2H}$ is concave for $H<\frac 12$, for any $t \ge 0$ and any  integer $j\ge 0$, we obtain
\begin{equation}
       \big| \E\big[(\Delta_j^nB^H)B^H_t\big] \big| =  \big| \big\langle\partial_{\frac jn},\varepsilon_t\big\rangle_\mathfrak{H} \big| \leq  n^{-2H}. \label{prop_fBM2}
\end{equation}
The following lemma has been proved in  \cite[Lemma 2.6]{HN3}.
    \begin{lemma} \label{lemma_fBM}
      Let $H<\frac12$ and  let $n\geq2$ be an integer. Then, there exists a constant $C$ not depending on $T$ such that:

      \noindent
     a) For any  $t\in [0,T]$,
     \[
     \sum_{j=0}^{\nT-1} \big| \big \langle \partial_{\frac jn }, \varepsilon_t \big  \rangle  _\mathfrak{H} \big|  \le C \nT^{2H} n^{-2H}.
     \]

     \noindent
     b)  For  any integers $r\geq1$ and  $0\le i\le \nT -1$,
            \begin{equation}  \label{eq4b}
            \sum_{j=0}^{\nT-1} \big| \big\langle\partial_{\frac jn },\partial_{\frac in}\big\rangle_\mathfrak{H}^r \big| \leq Cn^{-2rH},
            \end{equation}
          and consequently
            \begin{equation} \label{eq4a}
             \sum_{j,i=0}^{\nT-1} \big| \big\langle\partial_{\frac jn},\partial_{\frac in}\big\rangle_\mathfrak{H}^r \big|  \leq  C\textcolor{black}{\lfloor nT\rfloor} n^{-2rH}.
             \end{equation}
             \end{lemma}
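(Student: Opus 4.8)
The plan is to reduce everything to explicit computations with the covariance $R$ and to the elementary function $\phi(x)=|x|^{2H}$, exploiting that for $H<\frac12$ its first and second difference operators decay at rates that become (barely) summable after rescaling. Throughout I write $\phi(x)=|x|^{2H}$ and $D\phi(x)=\phi(x+1)-\phi(x)$.

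For part (b) I would first record the exact expression
\[
\langle\partial_{\frac jn},\partial_{\frac in}\rangle_\mathfrak{H} = \E[\Delta_j^nB^H\,\Delta_i^nB^H] = \tfrac12\,n^{-2H}\big(|k+1|^{2H}+|k-1|^{2H}-2|k|^{2H}\big),\qquad k=j-i,
\]
which follows by expanding $R$ in (\ref{cov}) and observing that the pure terms $(\cdot)^{2H}$ coming from the two arguments cancel. Writing $\gamma(k)$ for the bracketed quantity (a second difference of $\phi$), a Taylor expansion gives $|\gamma(k)|\le C|k|^{2H-2}$ for $|k|\ge1$. Since $H<\frac12$ forces $2H-2<-1$, and $r\ge1$ gives $r(2H-2)<-1$, the series $\sum_{k\in\Z}|\gamma(k)|^r$ converges to a finite constant $C_r$ independent of $n$, $i$ and $T$. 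Hence $\sum_{j=0}^{\nT-1}|\langle\partial_{\frac jn},\partial_{\frac in}\rangle_\mathfrak{H}|^r\le n^{-2rH}\sum_{k\in\Z}|\gamma(k)|^r=C_r\,n^{-2rH}$, which is (\ref{eq4b}); summing this bound over $0\le i\le\nT-1$ immediately yields (\ref{eq4a}).

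For part (a) the crude estimate (\ref{prop_fBM2}) only gives the bound $\nT\,n^{-2H}$, so the point is to extract the extra factor $\nT^{2H-1}$, i.e. to exploit the cancellation coming from $H<\frac12$. First I would compute exactly
\[
2n^{2H}\langle\partial_{\frac jn},\varepsilon_t\rangle_\mathfrak{H} = \big[(j+1)^{2H}-j^{2H}\big]+\big[|nt-j|^{2H}-|nt-j-1|^{2H}\big] = D\phi(j)+D\phi(nt-j-1).
\]
The first family telescopes with a definite sign: $\sum_{j=0}^{\nT-1}|D\phi(j)|=\sum_{j=0}^{\nT-1}\big((j+1)^{2H}-j^{2H}\big)=\nT^{2H}$. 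For the second family one has $|D\phi(x)|\le1$ for every $x$ (by the symmetry and monotonicity of $\phi$) and $|D\phi(x)|\le C|x|^{2H-1}$ for $|x|\ge2$. As $j$ ranges over $0,\dots,\nT-1$ the arguments $x_j:=nt-j-1$ form a unit-spaced grid of length $\nT$ contained in $[-nT,nT]$; bounding the at most four indices with $|x_j|<2$ by $1$ and the remaining ones by $C|x_j|^{2H-1}$, and comparing $\sum_{|x_j|\ge2}|x_j|^{2H-1}$ with the convergent integral $\int_1^{nT}x^{2H-1}\,dx\le\frac{(nT)^{2H}}{2H}$ (legitimate because $2H-1>-1$) together with $(nT)^{2H}\le C\nT^{2H}$, gives $\sum_{j=0}^{\nT-1}|D\phi(x_j)|\le C\nT^{2H}$. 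Adding the two contributions yields $\sum_{j=0}^{\nT-1}|\langle\partial_{\frac jn},\varepsilon_t\rangle_\mathfrak{H}|\le C\nT^{2H}n^{-2H}$.

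The covariance expansions and the telescoping identity are routine; the only genuine obstacle is the second sum in part (a), where the grid point with $j/n$ within $O(1/n)$ of $t$ produces a rescaled term of order $\big(n|t-\tfrac jn|\big)^{2H-1}$ that can be as large as $O(1)$. I expect this to be the crux: one must isolate the $O(1)$ many indices $j$ for which $x_j$ is close to $\{-1,0\}$ and dispatch them by the uniform bound $|D\phi|\le1$ (equivalently (\ref{prop_fBM2})), while controlling the remaining indices by the integrable decay $|x|^{2H-1}$, whose summability over a unit grid of length $nT$ is exactly what produces the sharp exponent $\nT^{2H}$ rather than $\nT$.
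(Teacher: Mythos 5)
Your proof is correct, and it is worth noting that the paper itself contains no proof of this lemma at all: it is quoted verbatim from \cite{HN3} (Lemma 2.6), so your argument should be compared with the one given there. It is essentially the same standard mechanism, packaged slightly differently. For (b), your reduction via stationarity of increments to $\langle\partial_{\frac jn},\partial_{\frac in}\rangle_{\mathfrak H}=\frac12 n^{-2H}\gamma(j-i)$ with $\gamma(k)=|k+1|^{2H}+|k-1|^{2H}-2|k|^{2H}$, together with the second-difference decay $|\gamma(k)|\le C|k|^{2H-2}$ and the observation that $r(2H-2)\le 2H-2<-1$ for $H<\frac12$, is exactly the argument in \cite{HN3}; summing (\ref{eq4b}) over $i$ to get (\ref{eq4a}) is likewise the intended route. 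For (a), Harnett--Nualart control the two first-difference families by sign considerations and telescoping, whereas you replace part of that by the pointwise bound $|D\phi(x)|\le C|x|^{2H-1}$ for $|x|\ge2$ plus comparison with $\int_1^{nT}x^{2H-1}\,dx\le (2H)^{-1}(nT)^{2H}$; this buys a slightly more robust argument (no need to track where the sign of $nt-j$ flips beyond isolating the $O(1)$ indices near $x_j\in\{-1,0\}$), at the cost of a harmless constant. You also correctly identified the one genuine point of the proof, namely that the crude bound (\ref{prop_fBM2}) only gives $\nT\, n^{-2H}$ and the extra factor $\nT^{2H-1}$ comes from summable decay of the first differences. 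Two cosmetic remarks: the uniform bound $|D\phi(x)|\le 1$ is best justified by the subadditivity estimate $\big| |a|^{2H}-|b|^{2H}\big|\le |a-b|^{2H}$ valid for $2H\le 1$ (the same concavity fact underlying (\ref{prop_fBM2})), rather than by ``symmetry and monotonicity'' alone; and the step $(nT)^{2H}\le C\,\nT^{2H}$ should carry the trivial remark that the sum is empty when $nT<1$, while $nT\le 2\lfloor nT\rfloor$ otherwise. Neither affects correctness.
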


             The next result  provides useful estimates when we compare two partitions. Its proof is based on computing telescopic sums.
\begin{lemma}
We fix  two integers $n>m\ge 2$,  and for any $j\ge 0$, \textcolor{black}{we} define
  $k:=k(j)= \sup \{i\ge 0: \frac im \le  \frac jn\}$.
The following inequalities hold true for some constant $C$ depending \textcolor{black}{only} on $T$:
\begin{eqnarray}  \label{2.6}
\sum_{j=0}^{\nT-1} \big| \big\langle\partial_{\frac jn},\varepsilon_{\frac {k(j)}m}\big\rangle_\mathfrak{H} \big|  & \leq& Cm^{1-2H}, \\  \label{2.7}
     \sum_{j=0}^{\nT-1} \big| \big\langle\partial_{\frac jn },\widetilde{\varepsilon}_{\frac jn }-\varepsilon_{\frac {k(j)}m}\big\rangle_\mathfrak{H} \big|  &\leq&  C m^{1-2H}
               \end{eqnarray}
               and, for any $0\leq i\leq \nT-1$,
\begin{equation}  \label{2.8}
             \sum_{j=0}^{\nT-1} \big| \big\langle\partial_{\frac jn},\widetilde{\varepsilon}_{\frac in}-\varepsilon_{\frac {k(i)} m} \big\rangle_\mathfrak{H} \big| \leq C m^{-2H} .
            \end{equation}
    \end{lemma}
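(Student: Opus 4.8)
The plan is to reduce each of the three inner products to an explicit expression involving the single function $g(x)=x^{2H}$ via the covariance formula $\langle\1_{[0,s]},\1_{[0,t]}\rangle_\mathfrak{H}=R(s,t)=\frac12(s^{2H}+t^{2H}-|s-t|^{2H})$, and then to exploit the concavity of $g$ (recall $2H<1$) together with telescoping. Writing $a=\frac jn$, $b=\frac{j+1}n$ and $t=\frac{k(j)}m$, a direct computation gives
\[
\big\langle\partial_{\frac jn},\varepsilon_{\frac{k(j)}m}\big\rangle_\mathfrak{H}=R(b,t)-R(a,t)=\tfrac12\big[(g(b)-g(a))-(g(b-t)-g(a-t))\big],
\]
while, using $\widetilde\varepsilon_{\frac jn}=\varepsilon_{\frac jn}+\frac12\partial_{\frac jn}$, one finds $\langle\partial_{\frac jn},\widetilde\varepsilon_{\frac jn}\rangle_\mathfrak{H}=\frac12(g(b)-g(a))$.

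The decisive point for (\ref{2.6}) and (\ref{2.7}) is that, setting $d_j:=\frac jn-\frac{k(j)}m\in[0,\frac1m)$, we have $b-t=d_j+\frac1n$ and $a-t=d_j$, so both quantities above are governed by $g(d_j+\frac1n)-g(d_j)$. In fact one obtains $\langle\partial_{\frac jn},\widetilde\varepsilon_{\frac jn}-\varepsilon_{\frac{k(j)}m}\rangle_\mathfrak{H}=\frac12\big(g(d_j+\frac1n)-g(d_j)\big)$ exactly, which reduces (\ref{2.7}) to estimating $\sum_j\big(g(d_j+\frac1n)-g(d_j)\big)$; the same sum controls (\ref{2.6}), since concavity of $g$ forces $\langle\partial_{\frac jn},\varepsilon_{\frac{k(j)}m}\rangle_\mathfrak{H}\le0$, whence $|\langle\partial_{\frac jn},\varepsilon_{\frac{k(j)}m}\rangle_\mathfrak{H}|=\frac12[(g(d_j+\frac1n)-g(d_j))-(g(b)-g(a))]\le\frac12(g(d_j+\frac1n)-g(d_j))$. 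To bound $\sum_j(g(d_j+\frac1n)-g(d_j))$ I would group the indices according to the constant value $k=k(j)=\lfloor mj/n\rfloor$; within each block the $d_j$ are consecutive, $d_{j+1}=d_j+\frac1n$, so the sum telescopes to $g(d_{j_1}+\frac1n)-g(d_{j_0})\le g(\frac2m)=2^{2H}m^{-2H}$, where $j_0,j_1$ are the first and last indices of the block and the right endpoint satisfies $d_{j_1}+\frac1n<\frac2m$ while $d_{j_0}\ge0$. Since $k$ takes at most $\lfloor mT\rfloor+1$ values, summing over blocks gives the bound $Cm^{1-2H}$.

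For (\ref{2.8}), with $i$ fixed I would use the decomposition $\widetilde\varepsilon_{\frac in}-\varepsilon_{\frac{k(i)}m}=\1_{(\frac{k(i)}m,\frac in]}+\frac12\partial_{\frac in}$. The contribution of $\frac12\partial_{\frac in}$ is controlled at once by (\ref{eq4b}) with $r=1$, giving $\frac12\sum_j|\langle\partial_{\frac jn},\partial_{\frac in}\rangle_\mathfrak{H}|\le Cn^{-2H}\le Cm^{-2H}$ since $n>m$. For the interval term, writing $p=\frac{k(i)}m\le q=\frac in$ with $q-p<\frac1m$, a computation of the same type gives $\langle\partial_{\frac jn},\1_{(p,q]}\rangle_\mathfrak{H}=\frac12\big(\phi(\frac{j+1}n)-\phi(\frac jn)\big)$ with $\phi(x)=|x-p|^{2H}-|x-q|^{2H}$. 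Hence $\sum_j|\langle\partial_{\frac jn},\1_{(p,q]}\rangle_\mathfrak{H}|$ is at most $\frac12$ times the total variation of $\phi$, which I would estimate by splitting the line into the three intervals $(-\infty,p]$, $[p,q]$, $[q,\infty)$ on each of which $\phi$ is monotone; using the subadditivity of $g$ and $q-p<\frac1m$, each piece contributes $\le Cm^{-2H}$, which yields the claim.

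The main obstacle is that the obvious termwise bound $g(d_j+\frac1n)-g(d_j)\le g(\frac1n)=n^{-2H}$ gives, after summing $\nT$ terms, only $Cn^{1-2H}$, which points in the \emph{wrong} direction since $n>m$ and $1-2H>0$. Circumventing this is exactly the purpose of the block-telescoping above, which replaces the factor $n^{1-2H}$ by $(\#\,\text{blocks})\cdot m^{-2H}\le Cm^{1-2H}$; the analogous subtlety in (\ref{2.8}) is the passage from the signed telescoping sum to the sum of absolute values, which is what makes the total-variation analysis of $\phi$ (rather than a crude termwise estimate) necessary.
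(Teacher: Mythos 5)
Your proposal is correct and takes essentially the same route as the paper: both reduce the inner products via the covariance formula to increments of $g(x)=x^{2H}$ and conclude by blockwise telescoping over the sets $\{j:k(j)=k\}$ for (\ref{2.6})--(\ref{2.7}), while your total-variation analysis of $\phi(x)=|x-p|^{2H}-|x-q|^{2H}$ over its three monotonicity intervals is exactly the paper's sign analysis of its quantities $H_j$ (with the sum split at $j_0$ and $i$) in different clothing. Your small deviations are sound and, if anything, slightly cleaner: the exact identity $\big\langle\partial_{\frac jn},\widetilde{\varepsilon}_{\frac jn}-\varepsilon_{\frac{k(j)}m}\big\rangle_{\mathfrak H}=\frac12\big(g(d_j+\frac1n)-g(d_j)\big)$ replaces the paper's triangle-inequality split of (\ref{2.7}) into (\ref{2.6}) plus a telescoping bound on $\sum_j\big|\langle\partial_{\frac jn},\widetilde{\varepsilon}_{\frac jn}\rangle_{\mathfrak H}\big|$, and handling the $\frac12\partial_{\frac in}$ piece of (\ref{2.8}) by Lemma \ref{lemma_fBM}(b) (giving $Cn^{-2H}\le Cm^{-2H}$) substitutes for the paper's symmetric treatment of the second term $A_2$.
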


  \begin{proof}
Let us first show (\ref{2.6}). We can write
    \begin{eqnarray*}
      && \sum_{j=0}^{\nT-1} \Big| \big\langle\partial_{\frac jn}, \varepsilon_{\frac {k(j)}m} \big\rangle_\mathfrak{H} \Big|  =  \sum_{j=0}^{\nT-1} \Big| \E\big[ \big(B^H_{\frac {j+1} n}-B^H_{\frac jn}\big) B^H_{\frac  {k(j)} m }   \big] \Big| \\
      && \qquad  =   \frac12 \sum_{j=0}^{\nT -1} \textcolor{black}{\Bigg|} \left(\frac {j+1} n\right)^{2H} -  \left( \frac jn \right)^{2H} - \left|  \frac {j+1}n- \frac {k(j)} m \right|^{2H} +
      \left| \frac jn -\frac {k(j)} m \right|^{2H} \textcolor{black}{\Bigg|} \\
      && \qquad  \leq \frac12\,  n^{-2H} \sum_{j=0}^{\nT-1} \Big[ (j+1)^{2H}-j^{2H} \Big] \\
      && \hspace{40mm} + \frac12  \sum_{j=0}^{\nT-1}   \left[ \textcolor{black}{\Bigg(} \frac {j+1}n- \frac {k(j)} m \textcolor{black}{\Bigg)^{2H}} -
     \textcolor{black}{\Bigg(} \frac jn -\frac {k(j)} m \textcolor{black}{\Bigg)^{2H}}  \right].
    \end{eqnarray*}
    The first term is a telescopic sum and  it is easy to show that
      $$ \frac12 n^{-2H} \sum_{j=0}^{\nT-1} \left[ (j+1)^{2H}-j^{2H} \right] = \frac12 n^{-2H} (\nT)^{2H}  \leq C \le C m^{1-2H}. $$
    For the second term, observe that, for a fixed $k=0,\dots, \mT+1$,  the sum of the terms for which $k(j)=k$ is telescopic and is bounded by a constant times $\textcolor{black}{n^{-2H}\leq m^{-2H}}$. Summing over all possible values of $k$, we obtain de desired bound $Cm^{1-2H}$.

The inequality (\ref{2.7})  is an immediate consequence of (\ref{2.6}) and the following easy fact:
\textcolor{black}{
\begin{eqnarray*}
 \sum_{j=0}^{\nT-1} \big| \big\langle\partial_{\frac jn },\widetilde{\varepsilon}_{\frac jn } \big\rangle_\mathfrak{H} \big|  &=&\frac12 \sum_{j=0}^{\nT-1} \Big| \E\big[ \big(B^H_{\frac {j+1} n}-B^H_{\frac jn}\big) \big(B^H_{\frac {j+1} n}+B^H_{\frac jn}\big)   \big] \Big|
\\
&=&   \frac{n^{-2H}}2 \sum_{j=0}^{\nT -1} \left[ (j+1)^{2H} -  j^{2H}\right]=\frac12 n^{-2H} (\nT)^{2H}  \leq C \le C m^{1-2H}.
 \end{eqnarray*}
 }

  Let us now proceed \textcolor{black}{with} the proof of   (\ref{2.8}). We can write
    \begin{eqnarray*}
      && \sum_{j=0}^{\nT-1} \Big|\big\langle \partial_{\frac jn},\widetilde{\varepsilon}_{\frac in }- \varepsilon_{\frac {k(i)}m }\big\rangle_\mathfrak{H}\Big| \\
      && \hspace{10mm} = \frac12 \sum_{j=0}^{\nT -1} \Big| \E\Big[ \big(B^H_{\frac {j+1} n }-B^H_{\frac jn }\big) \big(B^H_{\frac in}+B^H_{\frac {i+1} n}-\textcolor{black}{2}\,B^H_{\frac {k(i)}m }\big) \Big] \Big| \\
      && \hspace{10mm} \leq \frac12 \sum_{j=0}^{\nT-1} \Big| \E\Big[ \big(B^H_{\frac {j+1} n}-B^H_{\frac jn }\big) \big(B^H_{\frac in}-B^H_{\frac {k(i)} m}\big) \Big] \Big| \\
      && \hspace{40mm} + \,\frac12 \sum_{j=0}^{\nT-1} \Big| \E\Big[ \big(B^H_{\frac {j+1}n} -B^H_{\frac jn}\big) \big(B^H_{\frac {i+1} n} -B^H_{\frac {k(i)} m}\big) \Big] \Big| \\
      && \hspace{10mm} =: A_1 + A_2.
    \end{eqnarray*}
    For the term $A_1$ we have
\[
      A_1= \frac1 2  \sum_{j=0}^{\nT -1} \left|  H_j \right |,
     \]
      where
      \[
      H_j= \left | \frac in - \frac jn \right|^{2H} -\left |\frac in -\frac {j+1} n \right |^{2H} + \left|  \frac {k(i)} m -\frac {j+1}n \right|^{2H} - \left| \frac {k(i)}m  -\frac jn \right|^{2H}.
      \]
 Taking into account that $\frac {k(i)} m \le \frac in$, it follows that, for $j\ge i$
    \begin{eqnarray*}
      H_j &=& \left (  \frac jn -\frac in  \right)^{2H} -\left ( \frac {j+1} n-\frac in  \right )^{2H} + \left(    \frac {j+1}n-\frac {k(i)} m \right)^{2H} - \left( \frac jn -\frac {k(i)}m\right)^{2H}
       \\
&=& 2H  \int_0^{\frac 1n}  \left[\left( \frac jn +x -\frac {k(i)}m \right)^{2H-1}  -\left( \frac jn +x -\frac in \right)^{2H-1} \right]dx \le 0.
      \end{eqnarray*}
      On the other hand, if $j_0$ is the \textcolor{black}{largest integer}  $j\ge 0$ such that $\frac  {j+1}n \le \frac {k(i)}m$, then, for $j\le j_0$,
         \begin{eqnarray*}
      H_j &=& \left (  \frac in -\frac jn  \right)^{2H} -\left ( \frac in-\frac {j+1}n  \right )^{2H} + \left(   \frac {k(i)} m-  \frac {j+1}n  \right)^{2H} - \left(   \frac {k(i)}m-  \frac {j}n\right)^{2H}
       \\
&=& -2H  \int_0^{\frac 1n}  \left[\left( \frac {k(i)}m- \frac jn -x  \right)^{2H-1}  -\left( \frac in -\frac jn -x   \right)^{2H-1} \right]dx \le 0.
      \end{eqnarray*}
 Consider the decomposition
 \[
 A_1= \frac 12 \left( \sum_{j=0}^{ j_0} \left|  H_j \right | +   \sum_{j=j_0+1}^{ i-1} \left|  H_j \right | + \sum_{j=i}^{ \nT-1} \left|  H_j \right |  \right)
\textcolor{black}{=:} \frac 12 \left( A_{11}+ A_{12} + A_{13} \right).
\]
For the terms $A_{11}$ and $A_{13}$, we obtain, respectively
\begin{eqnarray*}
A_{11}&= &\sum_{j=0}^{ j_0}  (- H_j )  \\
&=& \left ( \frac  in - \frac {j_0+1} n \right)^{2H} - \left( \frac  {k(i)}  m-\frac {j_0+1} n  \right)^{2H}  -\textcolor{black}{\left( \frac in  \right)^{2H}} + \textcolor{black}{\left( \frac {k(i)}m \right) ^{2H}} \\
&\le&  2\left( \frac in -\frac {k(i)}m \right) ^{2H}  \le  Cm^{-2H}
\end{eqnarray*}
and
\begin{eqnarray*}
A_{13} &= &\sum_{j=i}^{ \nT-1}  (- H_j )  \\
&=& \left ( \frac {\nT} n- \frac in \right)^{2H} - \left( \frac {\nT} n -\frac {k(i)} m \right)^{2H} + \left( \frac in -\frac {k(i)}m \right) ^{2H}  \\
&\le& \left( \frac in -\frac {k(i)}m \right) ^{2H}  \le m^{-2H}.
\end{eqnarray*}
    Finally, for the term $A_{12}$, we have
    \begin{eqnarray*}
    A_{12} & \le&   \sum_{j=j_0+1} ^{i-1} \left| \left (  \frac in -\frac jn  \right)^{2H} -\left ( \frac in-\frac {j+1}n  \right )^{2H}  \right| \\
    && + {\color{black}{\sum_{j=j_0+1} ^{i-1}}} \left| \left(    \frac {j+1}n-\frac {k(i)} m \right)^{2H} - \left( \frac jn -\frac {k(i)}m\right)^{2H}  \right|  \\
    &=:& A_{121} + A_{122}.
    \end{eqnarray*}
    The term $A_{121}$ is a telescopic sum which produces a contribution of the form
    \[
     \left( \frac {i-j_0-1}n \right)^{2H} \le Cm^{-2H}
     \]
     and the term $A_{122}$ can be bounded as follows
          \begin{eqnarray*}
     A_{122} &\le&  \left|  \frac {j_0+2}n-\frac {k(i)} m \right|^{2H} + \left| \frac {j_0+1}n -\frac {k(i)}m\right|^{2H} +\sum_{j=j_0+2} ^{i-1} \left[ \left(    \frac {j+1}n-\frac {k(i)} m \right)^{2H} - \left( \frac jn -\frac {k(i)}m\right)^{2H}  \right] \\
     & \le  & Cm^{-2H} +  {\color{black}{\left( \frac i n-\frac {k(i)} m \right)^{2H} - \left( \frac {j_0+2}n -\frac {k(i)}m\right)^{2H} }}   \le Cm^{-2H}.
     \end{eqnarray*}
     \textcolor{black}{The term $A_2$ can be treated in a similar way.}
     This completes the proof.
  \end{proof}

  We will use the following lemma.
  \begin{lemma}  \label{lem21}  For any odd integer $r\ge 1$, we have
      \[
         \big(\Delta_j^n B^H \big)^r = \sum_{u=0}^{\left\lfloor\frac{r}{2}\right\rfloor} C_{r,u} n^{-2uH} \, I_{r-2u}(\partial_{\frac jn}^{\otimes r-2u}),
         \]
         where $C_{r,u}$  are some integers.
         \end{lemma}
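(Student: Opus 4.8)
The plan is to reduce the claim to the classical expansion of a monomial in Hermite polynomials, using the identity between Hermite polynomials and multiple Wiener--It\^o integrals recorded in (\ref{WI_stoch_int}). Write $\sigma:=n^{-H}$, so that by (\ref{prop_fBM1}) the increment $\Delta_j^nB^H=B^H(\partial_{\frac jn})$ is a centered Gaussian variable with $\|\partial_{\frac jn}\|_{\mathfrak H}=\sigma$. Since the element $\partial_{\frac jn}/\sigma$ has unit norm in $\mathfrak H$, the isometry (\ref{WI_stoch_int}) together with the homogeneity of $I_q$ yields, for each $0\le u\le\lfloor r/2\rfloor$,
\[
I_{r-2u}\big(\partial_{\frac jn}^{\otimes(r-2u)}\big)=\sigma^{\,r-2u}\,H_{r-2u}\!\left(\frac{\Delta_j^nB^H}{\sigma}\right).
\]

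First I would invoke the inversion of (\ref{Hermite}), namely the classical formula expressing powers in terms of Hermite polynomials,
\[
x^r=\sum_{u=0}^{\lfloor r/2\rfloor}\binom{r}{2u}\,(2u-1)!!\,H_{r-2u}(x),
\]
with the convention $(-1)!!=1$; this is proved by matching coefficients against the explicit expansion of $H_{r-2u}$, or by induction on $r$ using the three-term recursion $xH_q(x)=H_{q+1}(x)+qH_{q-1}(x)$. Applying this with $x=\Delta_j^nB^H/\sigma$ and multiplying both sides by $\sigma^r$, I would rewrite each summand as $\sigma^{2u}\cdot\sigma^{\,r-2u}H_{r-2u}(\Delta_j^nB^H/\sigma)$ and substitute the displayed identity above. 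Since $\sigma^{2u}=n^{-2uH}$, this gives exactly
\[
\big(\Delta_j^nB^H\big)^r=\sum_{u=0}^{\lfloor r/2\rfloor}\binom{r}{2u}\,(2u-1)!!\;n^{-2uH}\,I_{r-2u}\big(\partial_{\frac jn}^{\otimes(r-2u)}\big),
\]
so that $C_{r,u}=\binom{r}{2u}(2u-1)!!$, which is a product of integers and hence an integer.

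There is no real obstacle here: the statement is a normalization bookkeeping exercise, and the only points requiring care are the scaling factor $\sigma^{\,r-2u}$ attached to $I_{r-2u}$ (which is where the powers $n^{-2uH}$ originate) and the correct form of the Hermite inversion coefficients. I note that the hypothesis that $r$ is odd is not needed for the identity itself; its only effect is that $r-2u$ runs through the odd values $r,r-2,\dots,1$, so that no zeroth-order (constant) term appears, consistently with $(\Delta_j^nB^H)^r$ having zero mean.
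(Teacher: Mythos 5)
Your proof is correct and follows essentially the same route as the paper's: both invert the monomial into (probabilists') Hermite polynomials via the three-term recursion and then apply the identity $I_q(h^{\otimes q})=H_q(X(h))$ to the normalized increment $n^H\Delta_j^nB^H$, the scaling factor $\sigma^{r-2u}=n^{-(r-2u)H}$ being exactly where the powers $n^{-2uH}$ come from. The only difference is cosmetic: you make the coefficients explicit, $C_{r,u}=\binom{r}{2u}(2u-1)!!$ (which is correct, as one checks on $H_2,H_3,H_4$), whereas the paper merely records that the inductive argument produces integer coefficients.
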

         \begin{proof}
      By (\ref{prop_fBM1}), we have $\|\Delta_j^nB^H\|_{L^2(\Omega)}=n^{-H}$. For  any integer $q\geq1$, we  recall (see (\ref{Hermite})) that $H_q(x)$ denotes  the Hermite polynomial of degree $q$. \textcolor{black}{Using} an inductive argument \textcolor{black}{coming} from the relation $H_{q+1}(x)=xH_q(x)-qH_{q-1}(x)$, it follows that
        \begin{equation} \label{eq2a}
         x^r = \sum_{u=0}^{\left\lfloor\frac{r}{2}\right\rfloor} C_{r,u} H_{r-2u}(x),
         \end{equation}
    where $C_{r,u}$ is an integer. Applying (\ref{WI_stoch_int}) to   $h=n^H\partial_{\frac jn}$, that is, $X(h)
    ={\Delta_j^nB^H}/{\|\Delta_j^nB^H\|_{L^2(\Omega)}}=n^H\Delta_j^nB^H$, we can write
        \begin{equation} \label{eq3a}
         H_r(n^H\Delta_j^nB^H) = I_r(n^{rH}\partial_{\frac jn}^{\otimes r}).
         \end{equation}
Substituting (\ref{eq3a}) into (\ref{eq2a}), yields
        \[
         n^{rH}\big(\Delta_j^nB^H \big)^r = \sum_{u=0}^{\left\lfloor\frac{r}{2}\right\rfloor}  C_{r,u}  I_{r-2u}(n^{(r-2u)H}\partial_{\frac jn}^{\otimes r-2u}),
         \]
    which implies the desired result.
\end{proof}

\section{Proof of Theorem \ref{thm1}}
We recall that  $\ell=\ell(\nu)$ \textcolor{black}{is defined by (\ref{ell}).}
The  first ingredient of the proof is the following development,  established in \cite{GNRV}, based on Taylor's formula and the properties of the measure $\nu$
\begin{eqnarray}   \nonumber
f(b)&=& f(a) +(b-a) \int_0^1 f'(a+ \alpha(b-a)) \nu(d\alpha) \\   \label{e1}
&&+ \sum_{h=\ell} ^{2\ell}  k_{\nu,h}  f^{\textcolor{black}{(}2h+1\textcolor{black}{)}} \left( \frac {a+b}2 \right) (b-a) ^{2h+1} + (b-a) ^{4\ell+2} C(a,b),
\end{eqnarray}
where $a,b\in \R$ and  $C(a,b)$ is a continuous function such that $C(a,a)=0$. The constants $k_{\nu,h}$ are given by
\begin{equation}  \label{kanu}
k_{\nu,h} = \frac  1{(2h )!} \left[ \frac 1{(2h+1) 4^{h }} -\int_0^1 \left(\alpha -\frac 12 \right) ^{2 h} \nu(d\alpha)\right].
\end{equation}
Applying equality  (\ref{e1}) to $a= B^H_{\frac jn}$ and $b= B^H_{\frac {j+1} n}$ and using the notation
$\widetilde{B}^H_{\frac jn}=\frac12\big(B^H_{\frac jn}+B^H_{\frac {j+1} n}\big)$ and
       $\Delta_j^nB^H=B^H_{\frac {j+1} n}-B^H_{\frac jn}$, yields
  \begin{eqnarray*}
    f(B^H_{\nt})-f(0) &=& \sum_{j=0}^{\nt-1} \Delta_j ^nB^H \int_0^1 f'\big(B^H_{\frac jn }+\alpha\Delta _j^n B^H\big) \nu(d\alpha)  \\
    && +\sum_{h=\ell }^{2\ell} \sum_{j=0}^{\nt-1}    k_{\nu,h} \,f^{(2h+1)}(\widetilde{B}^H_{\frac jn }) (\Delta_j^n B^H)^{2h+1}  \\
    &&   + \sum_{j=0}^{\nt-1} C(B^H_{\frac jn},B^H_{\frac {j+1} n}) (\Delta_j^nB^H)^{4\ell+2},
  \end{eqnarray*}
  which can be written as
    \begin{equation}   \label{eqprinc2}
    f(B^H_{\nt})-f(0) = S^\nu_n(f',t) + \sum_{h=\ell}^{2\ell} \Phi_n^h(t) + R_n(t),
    \end{equation}
    where, for each $h= \ell ,\dots, 2\ell$,
    \begin{equation}   \label{phih}
    \Phi_n^h(t)= \sum_{j=0}^{\nt-1}    k_{\nu,h} \,f^{(2h+1)}(\widetilde{B}^H_{\frac jn }) (\Delta_j^n B^H)^{2h+1}
    \end{equation}
    and
    \[
    R_n(t)=\sum_{j=0}^{\nt-1} C(B^H_{\frac jn},B^H_{\frac {j+1} n}) (\Delta_j^nB^H)^{4\ell+2}.
    \]
Let us consider the convergence of each  term in the decomposition (\ref{eqprinc2}). First we  will show that
the term $ R_n(t)$ converges to zero  in probability, uniformly in compact sets. In fact, for any $T>0$, $K,\epsilon >0$, we can write
\begin{equation} \label{e2}
P\left( \sup_{0\le t\le T} | R_n(t)| >\epsilon \right) \le P \left({\sup_{s,t \in [0,T] \atop |t-s| \le \frac 1n}} |C(B^H_s,B^H_t)| > \frac 1K \right)
+ P \left(  \sum_{j=0}^{\nT-1} (\Delta_j^nB^H)^{4\ell+2} > K \epsilon \right).
   \end{equation}
   Taking into account that $H=\frac 1{ 4\ell+2}$ and using (\ref{prop_fBM1}), we can write, \textcolor{black}{with $\mu_k$ denoting the $k$th moment of the standard Gaussian},
 \begin{equation} \label{e3}
   P \left(  \sum_{j=0}^{\nT-1} (\Delta_j^nB^H)^{4\ell+2} > K \epsilon \right) \le \frac  {\textcolor{black}{\mu_{4\ell+2}}}{K \epsilon} \frac{\nT}n \le \frac   {T\textcolor{black}{\mu_{4\ell+2}}}{K\epsilon}.
      \end{equation}
   From (\ref{e2}) and (\ref{e3}), letting first $ n\rightarrow \infty$ and then $K\rightarrow \infty$ it follows that for any $\epsilon>0$ and $T>0$,
   \[
   \lim_{n\rightarrow \infty }P\left( \sup_{0\le t\le T} | R_n(t)| >\epsilon \right)  =0.
   \]

   On the other hand, by Lemma \ref{lemphi}, the terms $\Phi^h_n$ with $h= \ell+1 ,\dots, 2\ell$ converge to zero in the topology of $D([0,\infty))$ and do not contribute to the limit.
  As a consequence, the proof of Theorem \ref{thm1} follows from the next proposition.
  \begin{prop} \label{prop1}
  Under the assumptions of Theorem  \ref{thm1}, \textcolor{black}{one has}
      \begin{equation}  \label{e5}
\Phi_n^{\ell }(t)=\sum_{j=0}^{\nt-1}      \,f^{(2\ell+1)}(\widetilde{B}^H_{\frac jn }) (\Delta_j^n B^H)^{2\ell+1}  \,\mathrel{\mathop{\longrightarrow}^{\mathrm{\mathcal{L}}}_{\mathrm{n\rightarrow\infty}}} \sigma_{\ell} \int_0^t f^{(2\textcolor{black}{\ell}+1)}(B^H_s) \,dW_s,
         \end{equation}
      where $W=\{W_t, t\geq0\}$ is a Brownian motion independent of $B^H$, $\sigma_{\ell}$ is the constant  defined in (\ref{sigma}), and the convergence holds in the topology of the Skorohod space $D([0,\infty))$.
 \end{prop}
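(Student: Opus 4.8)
The plan is to prove Proposition \ref{prop1} by reducing $\Phi_n^\ell$ to its top-order Wiener chaos and then establishing tightness together with finite-dimensional stable convergence of that surviving term via a big-blocks/small-blocks scheme. Write $g=f^{(2\ell+1)}$. First I would apply Lemma \ref{lem21} to expand each increment power as
\[
(\Delta_j^nB^H)^{2\ell+1}=\sum_{u=0}^{\ell}C_{2\ell+1,u}\,n^{-2uH}\,I_{2\ell+1-2u}\big(\partial_{\frac jn}^{\otimes(2\ell+1-2u)}\big),
\]
so that $\Phi_n^\ell(t)$ splits into the top-chaos term ($u=0$) plus lower-chaos terms carrying an extra factor $n^{-2uH}$. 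Since $(2\ell+1)H=\tfrac12$, a term-by-term $L^2(\Omega)$ estimate shows every summand is of order $n^{-1/2}$; the point of the reduction is that, after weighting by $g(\widetilde B^H_{\frac jn})$ and summing, the contributions with $u\ge1$ tend to zero in $L^2(\Omega)$ uniformly on compacts, so that only
\[
\Psi_n(t):=\sum_{j=0}^{\nt-1}g(\widetilde B^H_{\frac jn})\,I_{2\ell+1}\big(\partial_{\frac jn}^{\otimes(2\ell+1)}\big)
\]
survives in the limit.

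To estimate the weighted lower-chaos sums (and, later, the block-freezing error), I would use the Malliavin integration-by-parts formula (\ref{dual}): taking expectations after multiplying by a test functional and expanding the products $g(\widetilde B^H_{\frac jn})\,I_{q}(\partial_{\frac jn}^{\otimes q})$ through the product rule (\ref{prod}) transfers the multiple integrals onto the iterated derivatives $D^k g(\widetilde B^H_{\frac jn})$, which are smooth functionals evaluated against $\widetilde\varepsilon_{\frac jn}$. This produces finite sums of products of inner products of the form $\langle\partial_{\frac jn},\partial_{\frac in}\rangle_\mathfrak{H}$ and $\langle\partial_{\frac jn},\widetilde\varepsilon_{\frac in}\rangle_\mathfrak{H}$, multiplied by moments of derivatives of $g$ that are finite by the moderate-growth hypothesis (Gaussian integrability, since $\alpha<2$). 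Summing these against the bounds (\ref{eq4b})--(\ref{eq4a}) of Lemma \ref{lemma_fBM} and the estimate (\ref{prop_fBM2}) yields the decaying gain $n^{-2uH}\to0$ for each $u\ge1$; this is exactly the step that fails for fractional-calculus arguments when $H<\tfrac12$ and forces the use of Malliavin calculus.

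For the surviving term $\Psi_n$ I would prove tightness in $D([0,\infty))$ via the moment criterion, bounding $\E[|\Psi_n(t)-\Psi_n(s)|^{2p}]\le C\,|t-s|^{p}$ for integer $p\ge2$ (up to the lattice spacing $1/n$) by combining the product formula (\ref{prod}), the hypercontractive estimate (\ref{j5}) and Lemma \ref{lemma_fBM}; here the identity $2H(2\ell+1)=1$ is precisely what produces the Brownian scaling. For the finite-dimensional stable convergence I would implement the big-blocks/small-blocks method on a coarse grid of mesh $1/m$ with $m\ll n$, exactly as supported by the partition-comparison lemma: replacing each weight $g(\widetilde B^H_{\frac jn})$ by $g(B^H_{\frac{k(j)}m})$ freezes it over each coarse block, the error being controlled by the integration-by-parts estimates above together with (\ref{2.6})--(\ref{2.8}). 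The frozen sum then reads $\sum_i g(B^H_{\frac im})\,\eta_i^{n,m}$ with $\eta_i^{n,m}$ the block sum of top-chaos terms; letting $n\to\infty$ with $m$ fixed, the stable convergence (\ref{er1}) applied block by block shows the $\eta_i^{n,m}$ become jointly Gaussian, asymptotically mutually independent and independent of $B^H$, with conditional variance $\sigma_\ell^2/m$ per block (with $\sigma_\ell$ as in (\ref{sigma})), and letting $m\to\infty$ identifies the limit as the It\^o integral $\sigma_\ell\int_0^t g(B^H_s)\,dW_s$ against a Brownian motion $W$ independent of $B^H$.

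The main obstacle, as flagged in the introduction, is the simultaneous control of the two error terms---the weighted lower-chaos remainders and the block-freezing error---because the long memory of $B^H$ (here $H<\tfrac12$) couples the weight to the high-order chaos and rules out pathwise estimates. The delicate point is the bookkeeping of the many inner-product terms generated by repeated integration by parts, and showing that their summation, governed precisely by Lemma \ref{lemma_fBM} and the partition-comparison lemma, produces the decaying factors $n^{-2uH}$ and a negative power of $m$ that make the errors vanish in the iterated limit $\lim_{m\to\infty}\lim_{n\to\infty}$. The remaining terms $\Phi_n^h$ with $h>\ell$ are disposed of separately in Lemma \ref{lemphi}, so that Proposition \ref{prop1} closes the proof of Theorem \ref{thm1}.
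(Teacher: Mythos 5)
Your overall architecture (tightness via moment bounds on increments, big-blocks/small-blocks, control of the weight-freezing error by Malliavin integration by parts together with the estimates \eqref{2.6}--\eqref{2.8}, and the final $L^2(\Omega)$ passage $m\to\infty$) coincides with the paper's proof, which uses Lemma \ref{lemphi} for tightness and Lemma \ref{lem1} for the error term $A^{(2,i)}_{n,m}$. However, your very first step --- projecting onto the top chaos and discarding the terms with $u\geq 1$ --- is a genuine gap, and in fact the claim is false in general. The scaling does not give you any ``decaying gain'': writing $q=2\ell+1-2u$ and $\rho(k)=\frac12\bigl(|k+1|^{2H}+|k-1|^{2H}-2|k|^{2H}\bigr)$, the second moment of $n^{-2uH}\sum_{j=0}^{\nt-1}I_q\bigl(\partial_{\frac jn}^{\otimes q}\bigr)$ is $q!\,n^{-4uH}\sum_{i,j}\langle\partial_{\frac in},\partial_{\frac jn}\rangle_{\mathfrak H}^{\,q}$, and the bound \eqref{eq4a} yields exactly
\begin{equation*}
n^{-4uH}\cdot n\cdot n^{-2qH}\;=\;n^{1-2H(2\ell+1)}\;=\;n^{0},
\end{equation*}
since $2H(2\ell+1)=1$ at the critical Hurst index. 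So the estimates of Lemma \ref{lemma_fBM} give $O(1)$, not $o(1)$: the prefactor $n^{-2uH}$ is precisely compensated by the larger variance of the lower-order chaos sums. Worse, this order is attained: for odd $3\le q\le 2\ell-1$ one has $\sum_{k\in\Z}\rho(k)^q\neq 0$ in general (e.g.\ $q=3$, $\ell=2$, $H=\frac1{10}$, where $\sum_k\rho(k)^3\approx 0.84$), so by a Breuer--Major argument these intermediate-chaos terms converge, after freezing the weights, to nondegenerate Gaussian limits and genuinely contribute to the limiting variance whenever $\ell\ge 2$. Only the first-chaos term $u=\ell$ is harmless, and that is due to telescoping ($\sum_j\Delta_j^nB^H=B^H_{\nt/n}$, and $\sum_k\rho(k)=0$), not to the crude absolute-value bounds you invoke.

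Consequently your surviving term $\Psi_n$ would converge to a Gaussian limit whose variance (coming from the chaos of order $2\ell+1$ alone) does not match $\sigma_\ell$, and your block-by-block appeal to \eqref{er1} is inconsistent, since \eqref{er1} is a statement about the full odd powers $\sum_j(\Delta_j^nB^H)^{2\ell+1}$ and its constant $\sigma_\ell$ encodes the contributions of \emph{all} the chaos components of $(\Delta_j^nB^H)^{2\ell+1}$. The paper avoids this issue entirely by never separating chaoses in the main term: it writes $\Phi_n^\ell(t_i)=A^{(1,i)}_{n,m}+A^{(2,i)}_{n,m}$, where $A^{(1,i)}_{n,m}$ freezes the weight $f^{(2\ell+1)}(B^H_{\frac km})$ against the intact increments $(\Delta_j^nB^H)^{2\ell+1}$, kills $A^{(2,i)}_{n,m}$ via Lemma \ref{lem1}, and then applies the stable convergence \eqref{er1} directly to the frozen sums. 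The chaos decomposition of Lemma \ref{lem21} is used in the paper only inside the moment estimates of Lemmas \ref{lemphi} and \ref{lem1}, where absolute-value bounds suffice --- never to simplify the limit itself. To repair your argument, drop the projection step and freeze the weights against the full odd powers, as the paper does.
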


\begin{proof}
  In order to show Proposition \ref{prop1}, we will \textcolor{black}{first} prove that the sequence of processes $\{ \Phi_n^\ell(t), t\ge 0\}$
 is tight in  $D([0,\infty))$, and \textcolor{black}{then that} their finite dimensional \textcolor{black}{distributions} converge to those of
 \[
 \left\{\sigma_{\ell} \int_0^t f^{(2\textcolor{black}{\ell}+1)}(B^H_s) \,dW_s, t\ge 0\right\}.
 \]
 Notice that the tightness of the sequence $\Phi^\ell _n$ is a consequence of Lemma \ref{lemphi}. \textcolor{black}{Indeed}, this lemma implies that for any $0\le s<t\le T$, there exist a constant depending on $T$, such that
   \[
   \E \left[ |\Phi_n^\ell(t)- \Phi_n^\ell(s) |^4 \right] \le  C  \sum_{N=2}^4 \left(  \frac {\nt -\ns }n \right)^ {N}.
   \]

   It remains to show the convergence of the finite-dimensional distributions.
 Fix a finite set of points $0\le t_1 < \cdots \le  t_d \le T$. We want to show  the \textcolor{black}{following convergence} in law, as $n$ tends to infinity:
 \begin{equation} \label{conV}
( \Phi_n^\ell(t_1), \dots, \Phi_n^\ell (t_d))  \mathrel{\mathop{\longrightarrow}^{\mathrm{\mathcal{L}}}_{\mathrm{n\rightarrow\infty}}} (Y_1, \dots, Y_d),
\end{equation}
where
\[
Y_i= \sigma_{\ell} \int_0^{t_i} f^{(2\textcolor{black}{\ell}+1)}(B^H_s) \,dW_s, \quad i=1,\dots, d,
\]
     $W=\{W_t, t\geq0\}$ is a Brownian motion independent of $B^H$, \textcolor{black}{and} $\sigma_{\ell}$ is the constant  defined in (\ref{sigma}).

 Taking into account the convergence (\ref{er1}), the main ingredient in the proof of the convergence (\ref{conV}) is the methodology based on the small blocks/big blocks (see, for instance,   \cite{CNW}).
  This method  consists in considering  two integers $2\le m <n$ and let first $n$ tend to infinity and later $m$ tend to infinity.  For any $k\ge 0$ we define
 the set $$
 \textcolor{black}{
 I_k= \{ j\in \{0,\ldots, \nti-1\}:  \frac km \le  \frac jn  < \frac {k+1}m\}.
 }
 $$
  The basic ingredient in this approach is the  decomposition
  \begin{eqnarray*}
   \Phi_n^\ell(t_i) &=& \textcolor{black}{ \sum_{k=0}^{ {\color{black}{\mti}} } \sum_{j\in I_k} f^{(2\ell +1)}(B^H_{\frac km }) \big(\Delta_j^n B^H\big)^{2\ell+1}} \\
    && \textcolor{black}{+ \sum_{k=0}^{ {\color{black}{\mti}} }\sum_{j\in I_k} \Big[f^{(2\ell+1)}(\widetilde{B}^H_{\frac jn })-f^{(2\ell +1)}(B^H_{\frac km})\Big] \big(\Delta_j^n B^H\big)^{2\ell+1}} \\
    &=:& A_{n,m}^{(1,i)} + A_{n,m}^{(2,i)}.
  \end{eqnarray*}
  From Lemma \ref{lem1} with $r=2\ell +1$ and $\phi= f^{(2\ell +1)}$, we can write, for any $q>2$,
  \begin{eqnarray*}
  \E[  (A_{n,m}^{(2,i)})^2 ]& \le&  C  \sup_{0\leq w\leq 3(2\ell +1)} \sup_{0\leq j\leq \nT-1} \big\|  f^ {(w)}({\color{black}{ \widetilde{B}^H_{\frac jn} }})-f^ {(w)}({\color{black}{ B^H_{\frac {k(j)}m} }})\big\|_{L^q(\Omega)}   ^2   \\
          &&   + \textcolor{black}{C} \sup_{0\leq w\leq 3(2\ell+1) } \sup_{0\leq j\leq \nT-1}\big\| f^{(w)}(\widetilde{B}^H_{\frac jn })\big\|_{L^2(\Omega)}   ^2 (m^{-2H} + n^{2H-1} \,m^{2-4H}) \\
          &&  + \textcolor{black}{C} \sup_{0\leq w\leq 3(2\ell+1)} \sup_{0\leq i,j\leq \nT-1} \big\|  f^{(w)}(\widetilde{B}^H_{\frac in })\big\|_{L^2(\Omega)}\big\|f^{(w)}(\widetilde{B}^H_{\frac jn})- f^{(w)}({\color{black}{ B^H_{\frac {k(j)}m } }})\big\|_{L^2(\Omega)} \\
          &&  \quad \times (1 + n^{2H-1} m^{2-4H}) \\
          &\le&  C\textcolor{black}{\Bigg[}  m^{-2H} +n^{2H-1} m^{2-4H}  \\
          && \textcolor{black}{\times\bigg( 1+} \sup_{0\leq w\leq 3(2\ell+1) }
           \sup_{s,t \in [0,T] \atop |t-s|\le \frac 1m}     \textcolor{black}{ \bigg\|} f^{(w)}(\frac {B^H_s+ B^H_{s+\frac 1n}}2)   -f^ {(w)}(B^H_t)    \textcolor{black}{\bigg\|^2_{L^q(\Omega)}}   \textcolor{black}{\bigg) \Bigg]},
  \end{eqnarray*}
  where   $k:=k(j)= \sup \{i\ge 0: \frac im \le  \frac jn\}$.
  This implies
  \[
  \limsup_{\textcolor{black}{n}\rightarrow \infty}    \E[  (A_{n,m}^{(2,i)})^2    ]  \le  C\Big(  m^{-2H}
           + \sup_{0\leq w\leq 3(2\ell+1) }
           \sup_{s,t \in [0,T] \atop |t-s|\le \frac 1m}      \big\| f^{(w)}(B^H_s)   -f^ {(w)}(B^H_t)    \big\|_{L^2(\Omega)}  ^2   \Big),
\]
which converges to zero as $\textcolor{black}{m}$ tends to infinity.

On the other hand,  from (\ref{er1}) we deduce that the  vector $( A_{n,m}^{(1,1)}, \dots A_{n,m}^{(1,d)}) $ converges in law,  as $\textcolor{black}{n}$ tends to infinity,  to  the vector with components
        \[
\sigma_\ell   \sum_{\textcolor{black}{k}=0}^{ {\color{black}{\mti}} } f^{(2\ell+1)}(\textcolor{black}{B}^H_{\textcolor{black}{\frac km}})  ( W_{\frac {\textcolor{black}{k}+1}{\textcolor{black}{m}}}- W_{\frac{\textcolor{black}{k}}{\textcolor{black}{m}}} ),
        \]
        $i=1,\dots, d$, where $W$ is a Brownian motion independent of $B^H$.
        Each of these components converges in   $L^2(\Omega)$ to the stochastic integral  $ \sigma_\ell \int_0^{t_i}  f^{(2\ell +1)}(B^H_s) \,dW_s$, as $\textcolor{black}{m}$ tends to infinity.
        This completes the proof of the theorem.
  \end{proof}

\section{Appendix}

  This section is devoted to state and prove  a couple of technical  lemmas. The first lemma is the basic ingredient to show that the sequence of processes $\Phi_n^\ell$ are tight and the processes $\Phi^{h}_n$ for $h=\ell+1, \dots, 2\ell$ converge to zero in $D([0,\infty))$. For this  we need to estimate the \textcolor{black}{fourth} moment of the increments of the processes $\Phi_n^h$.

  \begin{lemma}  \label{lemphi}
  Consider the processes $\Phi_n^h$, $h=\ell, \dots, 2\ell$ defined in (\ref{phih}). Then, for any $0\le s<t\le T$  we have
  \[
   \E \left[ |\Phi_n^h(t)- \Phi_n^h(s) |^4 \right] \le  C  \sum_{N=2}^4 \left( \nt -\ns  \right)^ {N}   n^{-2NH(2h+1)},
   \]
   where the constant $C$ depends \textcolor{black}{only} on $T$.
  \end{lemma}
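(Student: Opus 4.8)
The plan is to prove the estimate by a direct expansion of the fourth power, reducing it to a finite sum of products of inner products in $\mathfrak{H}$ that are summable by Lemma \ref{lemma_fBM} and the telescoping estimates (\ref{2.6})--(\ref{2.8}); the genuinely new feature, forced by $H<\frac12$, is that the random weights $f^{(2h+1)}(\widetilde B^H_{\frac jn})$ cannot be controlled pathwise and must be handled through Malliavin integration by parts. Writing $r=2h+1$, $G_j:=f^{(2h+1)}(\widetilde B^H_{\frac jn})$ and $J:=\{\ns,\dots,\nt-1\}$ (so $|J|=\nt-\ns$), I would first record that
\[
\Phi_n^h(t)-\Phi_n^h(s)=k_{\nu,h}\sum_{j\in J}G_j\,(\Delta_j^nB^H)^r ,
\]
so that, after raising to the fourth power and taking expectation, the whole problem reduces to bounding
\[
\sum_{j_1,j_2,j_3,j_4\in J}\Big|\E\Big[\prod_{a=1}^4 G_{j_a}\,(\Delta_{j_a}^nB^H)^r\Big]\Big| .
\]

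Next I would expand each factor by Lemma \ref{lem21}, writing $(\Delta_{j_a}^nB^H)^r=\sum_{u_a=0}^{h}C_{r,u_a}n^{-2u_aH}I_{q_a}(\partial_{\frac{j_a}n}^{\otimes q_a})$ with $q_a=r-2u_a$, and then apply the identity $I_{q_a}(\partial^{\otimes q_a})=\delta^{q_a}(\partial^{\otimes q_a})$ together with the duality relation (\ref{dual}) successively to the four factors. Each such integration by parts replaces a divergence by a Malliavin derivative $D^{q_a}$ that acts, through the Leibniz rule, on the remaining multiple integrals and on the weights. A derivative falling on another factor $I_{q_b}(\partial_{\frac{j_b}n}^{\otimes q_b})$ lowers its order and produces a scalar contraction $\langle\partial_{\frac{j_a}n},\partial_{\frac{j_b}n}\rangle_\mathfrak{H}$, while a derivative falling on a weight produces, since $D^pG_{j_b}=f^{(2h+1+p)}(\widetilde B^H_{\frac{j_b}n})\,\widetilde\varepsilon_{\frac{j_b}n}^{\otimes p}$, a diagonal contraction $\langle\partial_{\frac{j_a}n},\widetilde\varepsilon_{\frac{j_b}n}\rangle_\mathfrak{H}$ and raises the order of the derivative of $f$. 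Iterating, $\E[\prod_a G_{j_a}(\Delta_{j_a}^nB^H)^r]$ becomes a finite sum of terms made of the scalar prefactor $n^{-2(u_1+u_2+u_3+u_4)H}$, a product of contractions of these two types, and an expectation of a product of derivatives $f^{(w_a)}(\widetilde B^H_{\frac{j_a}n})$. Since the total number of derivatives landing on the weights is at most $\sum_a q_a\le 4r$, the orders obey $w_a\le 5(2h+1)\le 20\ell+5$, which is exactly the regularity assumed; by the generalized H\"older inequality, the moderate-growth hypothesis (recall that $\E[(\widetilde B^H_{\frac jn})^2]$ is bounded on $[0,T]$) and the hypercontractivity bound (\ref{j5}) applied to any residual multiple integral, these expectations are bounded uniformly in the indices and in $n$.

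It then remains to sum the resulting products of contractions over $(j_1,j_2,j_3,j_4)\in J^4$. Here the summations are carried out with Lemma \ref{lemma_fBM} and the telescoping estimates: a contraction $\langle\partial_{\frac jn},\partial_{\frac in}\rangle_\mathfrak{H}^p$ is summed in $j$ against a fixed $i$ by (\ref{eq4b}), two such indices are summed jointly by (\ref{eq4a}), and a diagonal contraction $\langle\partial_{\frac jn},\widetilde\varepsilon_{\frac in}\rangle_\mathfrak{H}$ is summed by (\ref{2.6})--(\ref{2.8}) and by the telescoping identity for $\sum_j|\langle\partial_{\frac jn},\widetilde\varepsilon_{\frac jn}\rangle_\mathfrak{H}|$. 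Organizing the terms by how the four indices group into correlated blocks and collecting the powers of $n^{-2H}$ then yields an upper bound of the announced form $C\sum_{N=2}^4(\nt-\ns)^N n^{-2NrH}=C\sum_{N=2}^4(\nt-\ns)^N n^{-2NH(2h+1)}$. The dominant term is $N=2$, coming from the configuration in which the four factors split into two fully matched pairs, each contributing $\sum_{i,j}|\langle\partial_{\frac jn},\partial_{\frac in}\rangle_\mathfrak{H}|^{r}\le C(\nt-\ns)n^{-2rH}$ by (\ref{eq4a}); the totally diagonal contribution (all four indices equal) is only $C(\nt-\ns)n^{-4rH}$ and is absorbed into this $N=2$ term, which is why the sum begins at $N=2$, while the remaining configurations produce the smaller terms $N=3,4$.

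The step I expect to be the main obstacle is the second one. Because $H<\frac12$ the increments cannot be compared pathwise, so the weights must be differentiated in the Malliavin sense, and this is precisely what generates the diagonal contractions $\langle\partial_{\frac jn},\widetilde\varepsilon_{\frac in}\rangle_\mathfrak{H}$. Controlling these is delicate: a crude pointwise bound would introduce a spurious growing factor and inflate the power of $\nt-\ns$. The key is that their summation telescopes, so that the refined estimates (\ref{2.6})--(\ref{2.8}) guarantee that each such contraction carries the scaling $n^{-2H}$ without an extra power of $\nt-\ns$; keeping careful track of this, together with the bookkeeping of the derivative orders of $f$, is what makes the final exponents match and pins down the range $N\in\{2,3,4\}$.
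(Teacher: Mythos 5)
Your proposal follows essentially the same route as the paper's proof: expand the fourth moment over the indices $(j_1,\dots,j_4)$, replace each $(\Delta_{j_a}^nB^H)^{2h+1}$ by its chaos expansion from Lemma \ref{lem21}, use Malliavin integration by parts to transfer everything onto inner products in $\mathfrak{H}$, and then perform a case analysis according to how many pairings $\langle\partial_{\frac{j_i}n},\partial_{\frac{j_k}n}\rangle_{\mathfrak H}$ occur, which is exactly what produces the three regimes $N=2,3,4$. Your iterated duality (applying $\delta^{q_a}$ factor by factor with the Leibniz rule) versus the paper's single application of the product formula (\ref{prod}) to $\prod_i I_{v_i}(\partial_{\frac{j_i}n}^{\otimes v_i})$ followed by one duality (\ref{dual}) against the full weight $Y_{\bf j}$ is a cosmetic difference: both generate the same family of terms, namely products of pairing contractions $\langle\partial_{\frac{j_i}n},\partial_{\frac{j_k}n}\rangle_{\mathfrak H}^{\alpha_{ik}}$ and diagonal factors $\langle\partial_{\frac{j_i}n},\widetilde{\varepsilon}_{\frac{j_k}n}\rangle_{\mathfrak H}^{\beta_{ik}}$ weighted by expectations of products of higher derivatives of $f$, and your derivative count $w\le 5(2h+1)\le 20\ell+5$ agrees with the paper's.

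One concrete correction, though. For the diagonal contractions you invoke (\ref{2.6})--(\ref{2.8}) and a telescoping argument, and in your last paragraph you present this as the crux. Those estimates compare \emph{two} partition scales $m<n$ (they involve $\varepsilon_{\frac{k(j)}m}$) and are the tools for the remainder Lemma \ref{lem1} in the big-blocks/small-blocks step; they are not applicable in Lemma \ref{lemphi}, where only one scale appears. In the paper's proof the diagonal factors are in fact never summed at all: each is bounded pointwise by $n^{-2H}$ via (\ref{prop_fBM2}) (note that $\widetilde{\varepsilon}_{\frac jn}$ is an average of two $\varepsilon$'s), and the sums over indices are consumed exclusively by the $\alpha$-contractions through (\ref{eq4b}) — two consumed sums when at least two components of $\alpha$ are nonzero, one when exactly one is, none when $\alpha=0$ — after which the exponent count $|{\bf u}|+|\alpha|+|\beta|\ge N(2h+1)$ in each case yields $(\nt-\ns)^N n^{-2NH(2h+1)}$. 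So your stated worry is inverted: the crude pointwise bound on the diagonal factors does not inflate the power of $\nt-\ns$, because those factors cost no sums. If one does want summed diagonal bounds — and your instinct is not useless here, since for the terms with $|{\bf u}|>0$ a summed bound tightens the bookkeeping of the prefactors $n^{-2u_iH}$ — the correct tool is Lemma \ref{lemma_fBM}.a, which gives $\sum_{j=0}^{\nT-1}\big|\langle\partial_{\frac jn},\varepsilon_t\rangle_{\mathfrak H}\big|\le C$ uniformly in $t\in[0,T]$, not the two-scale estimates (\ref{2.6})--(\ref{2.8}).
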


  \begin{proof}
     For any $0\le s<t\le T$ we can write
\[
 \E \left[ |\Phi_n(t)- \Phi_n(s) |^4 \right] =  \sum_{j_1,j_2,j_3,j_4 =\ns} ^{\nt-1}
\E \left[ \prod_{i=1}^4   \left( f  ^{(2h+1)}(\widetilde{B}^H_{\frac {j_i}n }) (\Delta_{j_i}^n B^H)^{2h+1} \right) \right].
\]
By Lemma \ref{lem21} we obtain
\[
(\Delta_{j_i}^n B^H)^{2h+1}=\sum_{u=0}^{h} C_{2h+1,u} n^{-2uH} \, I_{2h+1-2u}(\partial_{\frac {j_i}n}^{\otimes  (2h +1-2u)  } ),
\]
which leads to
\[
 \prod_{i=1}^4  (\Delta_{j_i}^n B^H)^{2h+1}=  \sum_{u_1,u_2,u_3,u_4=0}^{h}  C_{h, \bf{u}} n^{-2 | {\bf u}|H} \prod _{i=1}^4  \left( \, I_{2h+1-2u_i}(\partial_{\frac {j_i}n}^{\otimes  (2h +1-2u_i)  } ) \right),
 \]
 where $C_{h ,\bf{u}}$ is a constant depending on $h$ and the vector ${\bf u}= (u_1, u_2, u_3, u_4)$ and we use the notation  $|{\bf u}| =u_1+u_2+u_3+u_4$.  To simplify the notation we write
 $2h +1 -u_i=  v_i$ for $i=1,2,3,4$. The product formula for multiple stochastic integrals allows us to write
 \begin{eqnarray*}
&&  \prod _{i=1}^4  \left( \, I_{v_i}(\partial_{\frac {j_i}n}^{\otimes  v_i  } ) \right) = \sum_{\alpha \in \Lambda}   C_\alpha
\prod_{1\le i<k \le 4} \langle \partial _{\frac {j_i} n}, \partial_ {\frac {j_k}n} \rangle_{\mathcal{H}}^{\alpha_{ik}}\\
    && \quad  \times  I_{ |{\bf v}| -2 |\alpha|}
    \left(\partial_{\frac {j_1}n} ^{\otimes ^{v_1- \alpha_{12}- \alpha_{13} -\alpha_{14} }}  \otimes    \partial_{\frac {j_2}n} ^{\otimes ^{v_{2}- \alpha_{12}- \alpha_{23} -\alpha_{24} }}  \otimes
    \partial_{\frac {j_3}n} ^{\otimes ^{v_{3}- \alpha_{13}- \alpha_{23} -\alpha_{34} }}  \otimes \partial_{\frac {j_4}n} ^{\otimes ^{v_4- \alpha_{14}- \alpha_{24} -\alpha_{34} }}
     \right),
  \end{eqnarray*}
 where $|{\bf v}| =v_1+v_2+v_3+v_4= 8h+4 - |{\bf u}|$,   $\Lambda$ is the set of all  multi\textcolor{black}{indices} $ \alpha=(\alpha_{12}, \alpha_{13}, \alpha_{14}, \alpha_{23}, \alpha_{24},\alpha_{34})$ with $\alpha_{ik} \ge 0$, such that
 \begin{eqnarray*}
 \alpha_{12}+ \alpha_{13} +\alpha_{14}  &\le& v_1 \\
  \alpha_{12}+ \alpha_{23} +\alpha_{24}  &\le& v_2 \\
   \alpha_{13}+ \alpha_{23} +\alpha_{34}  &\le& v_3 \\
    \alpha_{14}+ \alpha_{24} +\alpha_{34}  &\le& v_4.
 \end{eqnarray*}
 For any ${\bf j}= (j_1, j_2, j_3, j_4)$, $\ns \le j_i \le \nt-1$,  we set
 \[
 Y_{\bf j } =\prod_{i=1}^4   f  ^{(2h+1)}(\widetilde{B}^H_{\frac {j_i}n }),
 \]
 and
\[
h_{ {\bf j}, {\bf \alpha}, {\bf v}} =\partial_{\frac {j_1}n} ^{\otimes ^{v_1- \alpha_{12}- \alpha_{13} -\alpha_{14} }}  \otimes    \partial_{\frac {j_2}n} ^{\otimes ^{v_{2}- \alpha_{12}- \alpha_{23} -\alpha_{24} }}  \otimes
    \partial_{\frac {j_3}n} ^{\otimes ^{v_{3}- \alpha_{13}- \alpha_{23} -\alpha_{34} }}  \otimes \partial_{\frac {j_4}n} ^{\otimes ^{v_4}- \alpha_{14}- \alpha_{24} -\alpha_{34} }.
    \]
 Applying the duality formula  (\ref{dual}) we obtain
 \[
 \E \left[ Y_{\bf j }     I_{ |{\bf v}| -2 |\alpha|} (h_{ {\bf j}, {\bf \alpha}, {\bf v}})\right]=
 \E \left[  \langle D^{ |{\bf v}| -2 |\alpha|} Y_{\bf j } , h_{ {\bf j}, {\bf \alpha}, {\bf v}} \rangle_{{\mathcal H}^{ \otimes  |{\bf v}| -2 |\alpha|}} \right].
     \]
     Therefore, we have shown the following formula
     \begin{eqnarray*}
      \E \left[ |\Phi_n^h(t)- \Phi_n^h(s) |^4 \right] &=&  \sum_{ {\bf j} }
      \sum_{\bf u}   C_{h, \bf{u}} n^{-2 | {\bf u}|H}
      \sum_{\alpha \in \Lambda}   C_\alpha   \\
      &&\times \left(
      \prod_{1\le i<k \le 4} \langle \partial _{\frac {j_i} n}, \partial_ {\frac {j_k}n} \rangle_{\mathcal{H}}^{\alpha_{ik}} \right)
      \E \left[  \langle D^{ |{\bf v}| -2 |\alpha|} Y_{\bf j } , h_{ {\bf j}, {\bf \alpha}, {\bf v}} \rangle_{{\mathcal H}^{ \otimes  |{\bf v}| -2 |\alpha|}} \right],
      \end{eqnarray*}
      where the components of ${\bf j}$ satisfy $\ns \le j_i \le \nt-1$ and  $0\le u_i \le  h$. Finally, the inner product
      $\langle D^{ |{\bf v}| -2 |\alpha|} Y_{\bf j } , h_{ {\bf j}, {\bf \alpha}, {\bf v}} \rangle_{{\mathcal H}^{ \otimes  |{\bf v}| -2 |\alpha|}}$ can be expressed in the form
      \[
      \sum_{\beta \in \Gamma}  \Phi_\beta \prod_{1\le i,k \le 4}  \langle \partial _{\frac {j_i} n},  \widetilde{\varepsilon}_ {\frac {j_k}n} \rangle_{\mathcal{H}}^{\beta_{ik}},
      \]
      where $\beta=(\beta_{ik})_{1\le i,k \le 4}$ is a matrix with nonnegative entries such that
      \begin{eqnarray*}
      \sum_{k=1}^4 \beta_{1k} &=&   v_1- \alpha_{12}- \alpha_{13} -\alpha_{14}  \\
           \sum_{k=1}^4 \beta_{2k} &=&   v_2- \alpha_{12}- \alpha_{23} -\alpha_{24}\\
                 \sum_{k=1}^4 \beta_{3k} &=&   v_3- \alpha_{13}- \alpha_{23} -\alpha_{34}\\
                       \sum_{k=1}^4 \beta_{4k} &=&   v_4- \alpha_{14}- \alpha_{24} -\alpha_{34} .
      \end{eqnarray*}
      Notice that
         $|\beta|= \sum_{i,k=1}^4 \beta_{ik}= |{\bf v}| -2|\alpha|$. The random variables $\Phi_\beta$ are linear combination of products of the form $\prod_{i=1}^4 f^{(w_i)} (B^H_{\widetilde {\varepsilon}_{\frac {j_i}n}})$, with $2h+1 \le  w_i \le  2h+1+|{\bf v}| -2|\alpha|$.
      This leads to the following estimate
      \[
       \E \left[ |\Phi_n^h(t)- \Phi_n^h(s) |^4 \right]\le   C \sum_{ {\bf j} } \sum_{\bf u}  n^{-2 | {\bf u}|H}   \sum_{\alpha \in \Lambda}    \sum_{\beta \in \Gamma}   \prod_{1\le i<k \le 4}  \left|  \langle \partial _{\frac {j_i} n}, \partial_ {\frac {j_k}n} \rangle_{\mathcal{H}}^{\alpha_{ik}}  \right| \prod_{1\le i,k \le 4}   \left| \langle \partial _{\frac {j_i} n},  \widetilde{\varepsilon}_ {\frac {j_k}n} \rangle_{\mathcal{H}}^{\beta_{ik}} \right|.
       \]
      Consider the decomposition of the above sum as follows
      \[
      \E \left[ |\Phi_n(t)- \Phi_n(s) |^4 \right]\le   C \left( A_n^{(1)} + A_n^{(2)} + A_n^{(3}) \right),
      \]
      where $A_n^{(1)}$ contains all the terms such that  at least two components of $\alpha$ are nonzero,
      $A_n^{(2 )}$ contains all the terms such that  one component of $\alpha$ is nonzero and the others vanish, and  $ A_n^{(3\textcolor{black}{)}}$ contains all the terms such that all the components of $\alpha$ are zero.

      \medskip
      \noindent
      {\it Step 1.} Let us first estimate $A_n^{(1)}$. Without any loss of generality, we can assume that   $\alpha_{12}  \ge 1$ and  $\alpha_{13} \ge 1$.
     From (\ref{eq4b}) with $r=1$, we obtain
      \begin{equation} \label{m1}
      \sum_{j_1=\ns}^ {\nt-1}   \left| \langle \partial _{\frac {j_1} n}, \partial_ {\frac {j_2}n} \rangle_{\mathcal{H}}  \right|\le C n^{-2H}
      \end{equation}
      and
          \[
      \sum_{j_3=\ns} ^{\nt-1}   \left| \langle \partial _{\frac {j_1} n}, \partial_ {\frac {j_3}n} \rangle_{\mathcal{H}}  \right|\le C n^{-2H}.
      \]
      We estimate each of the remaining factors by $n^{-2H}$. In this way, we obtain a bound of the form
      \[
      A_n^{(1)} \le C   ( \nt- \ns) ^{2} n^{-2H( |{\bf u}| +|\alpha| +|\beta| )}.
      \]
      Taking into account that $|\alpha| \le \frac 12 |{\bf v}|$,
      \begin{eqnarray*}
       |{\bf u}| +|\alpha| +|\beta|  &=& |{\bf u}| +|\alpha| + |{\bf v}| -2 |\alpha|\\
       & =&  |{\bf u}| + |{\bf v}| - |\alpha| \ge  |{\bf u}|  +\frac { |{\bf v}| }2= 4h +2 -\frac  {|{\bf u}| }2 \ge 4h +2,
       \end{eqnarray*}
       and, as a consequence,
       \begin{equation} \label{t1}
        A_n^{(1)} \le C   ( \nt- \ns) ^{2} n^{-4H(2h+1)}.
        \end{equation}

            \medskip
      \noindent
      {\it Step 2.}   For the term $A_n^{(2)}$,  we can assume that $\alpha_{12} \ge 1$ and all the other components of $\alpha$ vanish. In this case, we still have the inequality (\ref{m1}).  Then, we estimate each of the remaining factors by $n^{-2H}$. In this way, we obtain a bound of the form
      \[
      A_n^{(2)} \le C   ( \nt- \ns) ^{3} n^{-2H( |{\bf u}| +|\alpha| +|\beta| )}.
      \]
      Taking into account that $|\alpha|  = \alpha_{12}  \le v_1 =2h +1 -u_1 \le 2h +1$,
      \begin{eqnarray*}
       |{\bf u}| +|\alpha| +|\beta|  &=& |{\bf u}| +|\alpha| + |{\bf v}| -2 |\alpha|\\
       & =&  |{\bf u}| + |{\bf v}| - |\alpha| \ge  |{\bf u}|  + |{\bf v}|  -2h -1= 6h +3,
       \end{eqnarray*}
       and, as a consequence,  we obtain
       \begin{equation} \label{t2}
        A_n^{(2)} \le C   ( \nt- \ns) ^{3} n^{-6H(2h+1)}.
        \end{equation}

      \medskip
      \noindent
      {\it Step 3.}   Estimating all terms by $n^{-2H}$, we get
      \[
      A_n^{(3)} \le C   ( \nt- \ns) ^{4} n^{-2H( |{\bf u}| +|\beta| )}.
      \]
    We have
\[
       |{\bf u}|  +|\beta|  = |{\bf u}| + |{\bf v}|  =  8h +4,
       \]
       and, as a consequence,  we obtain
       \begin{equation} \label{t3}
        A_n^{(3)} \le C   ( \nt- \ns) ^{4} n^{-8H(2h+1)}.
        \end{equation}

In conclusion, from (\ref{t1}), (\ref{t2}) and (\ref{t3}), we obtain the desired estimate.
 This completes the proof of the lemma.
 \end{proof}

  The second lemma  provides a bound for  the residual term in the application of the small blocks / big blocks technique and it is a variation of   \textcolor{black}{\cite[Lemma 3.2]{HN3}}. Its proof is based on the techniques of Malliavin calculus.
  As before  for two integers $n>m\ge 2$, for any $j\ge 0$ we define
  $k:=k(j)= \sup \{i\ge 0: \frac im \le  \frac jn\}$.

    \begin{lemma}  \label{lem1}
      Let $r=1,3,5,\dots$ and  $n>m\geq2$ be two integers. Let $\phi:\R\rightarrow\R$ be a $\mathcal{C}^{2r}$ function such that $\phi$ and all derivatives up to order $2r$ have moderate growth, and let  $B^H=\{B^H_t,t\geq0\}$ be a  fBm with Hurst parameter $H<\frac12$. Then,  for any $q>2$ and any $T>0$,
        \begin{eqnarray*}
          &&\sup_{t\in [0,T]} \E\left[ \left( \sum_{j=0}^{\nt -1}   \left(\phi(\widetilde{B}^H_{\frac jn})-\phi(B_{\frac {k(j)}m })\right) (\Delta_j^nB^H)^r \right)^2 \right] \leq C \varGamma_{m,n}   n^{1-2rH},
        \end{eqnarray*}
      where   $C$ is a positive constant depending on $q$, $r$, $H$ and $T$, and
        \begin{eqnarray*}
          \varGamma_{m,n} &:=& \sup_{0\leq w\leq 2r} \sup_{0\leq j\leq \nT-1} \big\| \phi^ {(w)}(\widetilde{B}^H_{\frac jn})-\phi^ {(w)}(B^H_{\frac {k(j)}m} )\big\|_{L^q(\Omega)}   ^2   \\
          &&   + \sup_{0\leq w\leq 2r} \sup_{0\leq j\leq \nT-1}\big\|\phi^{(w)}(\widetilde{B}^H_{\frac jn })\big\|_{L^2(\Omega)}   ^2 (m^{-2H} + n^{2H-1} \,m^{2-4H}) \\
          &&  + \sup_{0\leq w\leq 2r} \sup_{0\leq i,j\leq \nT-1} \big\|\phi^{(w)}(\widetilde{B}^H_{\frac in })\big\|_{L^2(\Omega)}\big\|\phi^{(w)}(\widetilde{B}^H_{\frac jn})-\phi^{(w)}(B^H_{\frac {k(j)}n })\big\|_{L^2(\Omega)} \\
          &&  \quad \times (1 + n^{2H-1} m^{2-4H}).
        \end{eqnarray*}
          \end{lemma}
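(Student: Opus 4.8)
The plan is to expand the square, substitute the chaos expansion of the odd powers from Lemma \ref{lem21}, and then use Malliavin integration by parts to move the resulting multiple integrals onto the random weights, exploiting the difference structure of those weights to reproduce the three groups of terms appearing in $\varGamma_{m,n}$. Throughout I write $g_j:=\phi(\widetilde{B}^H_{\frac jn})-\phi(B^H_{\frac{k(j)}m})$ for the weights, so that the quantity to estimate is $\sum_{i,j=0}^{\nt-1}\E[g_ig_j(\Delta_i^nB^H)^r(\Delta_j^nB^H)^r]$. Since $\nt\le\nT$, it suffices to bound this with the sums running up to $\nT-1$, so that the estimates of Lemma \ref{lemma_fBM} and of (\ref{2.6})--(\ref{2.8}) are available uniformly in $t\in[0,T]$.

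First I would apply Lemma \ref{lem21} to write each factor $(\Delta_j^nB^H)^r$ as the finite combination $\sum_{u=0}^{(r-1)/2}C_{r,u}n^{-2uH}I_{r-2u}(\partial_{\frac jn}^{\otimes(r-2u)})$, reducing the problem to estimating, for odd integers $1\le a,b\le r$, the quantities $\E[g_ig_j\,I_a(\partial_{\frac in}^{\otimes a})I_b(\partial_{\frac jn}^{\otimes b})]$ weighted by $n^{-2(u+u')H}$ with $a=r-2u$, $b=r-2u'$ (so that the coefficient penalty only helps and the dominant term is $u=u'=0$). For each such term I would use the product formula (\ref{prod}) to merge the two multiple integrals into $\sum_{z}c_z\langle\partial_{\frac in},\partial_{\frac jn}\rangle_\mathfrak{H}^{\,z}\,I_{a+b-2z}(\partial_{\frac in}^{\otimes(a-z)}\otimes\partial_{\frac jn}^{\otimes(b-z)})$, and then the duality formula (\ref{dual}) to rewrite $\E[g_ig_j\,I_{a+b-2z}(\cdots)]$ as $\E[\langle D^{a+b-2z}(g_ig_j),\,\partial_{\frac in}^{\otimes(a-z)}\otimes\partial_{\frac jn}^{\otimes(b-z)}\rangle_{\mathfrak{H}^{\otimes(a+b-2z)}}]$.

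Next I would expand $D^{a+b-2z}(g_ig_j)$ by the Leibniz and chain rules: since the arguments are Gaussian with deterministic derivatives, each differentiation of a factor $\phi^{(w)}(\widetilde{B}^H_{\frac jn})$ produces $\phi^{(w+1)}(\widetilde{B}^H_{\frac jn})\,\widetilde{\varepsilon}_{\frac jn}$ and each differentiation of $\phi^{(w)}(B^H_{\frac{k(j)}m})$ produces $\phi^{(w+1)}(B^H_{\frac{k(j)}m})\,\varepsilon_{\frac{k(j)}m}$, with the orders $w$ staying in $0\le w\le 2r$ so that the moderate-growth hypothesis keeps all the $L^q$-norms finite. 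Pairing these tensors against $\partial_{\frac in}^{\otimes(a-z)}\otimes\partial_{\frac jn}^{\otimes(b-z)}$ turns the inner product into a product of scalar factors of type $\langle\partial,\widetilde{\varepsilon}\rangle_\mathfrak{H}$ and $\langle\partial,\varepsilon\rangle_\mathfrak{H}$, times the $z$ cross-factors $\langle\partial_{\frac in},\partial_{\frac jn}\rangle_\mathfrak{H}$. The crucial step is then to exploit that every $g_j$ is a difference: I would split each contribution into a \emph{Hilbert-difference} piece carrying $\widetilde{\varepsilon}_{\frac jn}-\varepsilon_{\frac{k(j)}m}$ inside a scalar product, plus a \emph{random-difference} piece carrying the random factor $\phi^{(w)}(\widetilde{B}^H_{\frac jn})-\phi^{(w)}(B^H_{\frac{k(j)}m})$ (for instance $\langle D^rg_i,\partial_{\frac in}^{\otimes r}\rangle$ is split by writing $\langle\widetilde\varepsilon_{\frac in},\partial_{\frac in}\rangle^r-\langle\varepsilon_{\frac{k(i)}m},\partial_{\frac in}\rangle^r$ as $\langle\partial_{\frac in},\widetilde\varepsilon_{\frac in}-\varepsilon_{\frac{k(i)}m}\rangle$ times a bounded factor, and isolating the remaining $\phi^{(r)}$-difference). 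Collecting terms by how many factors of each kind occur produces exactly the three groups in $\varGamma_{m,n}$: two random-differences, one random-difference, and only Hilbert-differences.

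Finally I would estimate term by term: the random factors are controlled by the Cauchy--Schwarz inequality and the hypercontractivity bound (\ref{j5}), yielding the $L^q$- and $L^2$-norms that enter $\varGamma_{m,n}$, while the deterministic scalar-product sums are handled as follows: the cross-factors $\langle\partial_{\frac in},\partial_{\frac jn}\rangle_\mathfrak{H}$ by (\ref{eq4b})--(\ref{eq4a}) of Lemma \ref{lemma_fBM}, the undifferenced $\langle\partial,\widetilde{\varepsilon}\rangle_\mathfrak{H}$ and $\langle\partial,\varepsilon\rangle_\mathfrak{H}$ by (\ref{prop_fBM2}) and Lemma \ref{lemma_fBM} a), and the Hilbert-difference factors by the comparison estimates (\ref{2.7}) and (\ref{2.8}). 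Tracking the scaling, the two $r$-th powers together contribute $n^{-2rH}$, a single genuinely free summation index contributes $\lfloor nT\rfloor\asymp n$, and the remaining constrained sums and difference factors assemble into $\varGamma_{m,n}$; the powers of $m$ (namely $m^{-2H}$ from one Hilbert difference and $m^{2-4H}$ from two, paired with the excess $n^{2H-1}$ that converts a free index into a constrained one) reproduce precisely the combinations $m^{-2H}+n^{2H-1}m^{2-4H}$ and $1+n^{2H-1}m^{2-4H}$. I expect the main obstacle to be exactly this exponent bookkeeping: one must verify, uniformly over all terms generated by the product formula and the Leibniz expansion — and especially in the delicate case $z=0$ where the $i$- and $j$-sums threaten to decouple — that enough constrained scalar products are always present to force the factor $n^{1-2rH}$ while leaving at most one free index, and that each difference factor is matched to the correct component of $\varGamma_{m,n}$.
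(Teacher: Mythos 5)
Your proposal follows essentially the same route as the paper's own proof: chaos expansion of $(\Delta_j^nB^H)^r$ via Lemma \ref{lem21}, the product formula (\ref{prod}), Malliavin duality (\ref{dual}) combined with the Leibniz rule, the same splitting of $D^a\big(\phi(\widetilde{B}^H_{\frac jn})-\phi(B^H_{\frac{k(j)}m})\big)$ into a random-difference piece and a Hilbert-space-difference piece carrying $\widetilde{\varepsilon}_{\frac jn}^{\otimes a}-\varepsilon_{\frac{k(j)}m}^{\otimes a}$, and the same deterministic estimates (\ref{prop_fBM2}), Lemma \ref{lemma_fBM} and (\ref{2.6})--(\ref{2.8}), with the identical exponent bookkeeping leading to $n^{1-2rH}$ and the factors $m^{-2H}$, $m^{2-4H}$. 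The only cosmetic deviation is that you apply the duality formula for every contraction order $z$, whereas the paper integrates by parts only in the case $z=0$ and disposes of the terms with $z\geq1$ directly by H\"older's inequality together with hypercontractivity (\ref{j5}) --- which is precisely where the $L^q$-norms, $q>2$, in the first group of $\varGamma_{m,n}$ originate; both variants yield the stated bound.
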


  \begin{proof}
  The proof is based in the methodology used to show Lemma 3.2 in \cite{HN3}.
    To simplify notation, let $Y_j (\phi):=\phi(\widetilde{B}^H_{\frac jn })-\phi(B^H_{\frac {k(j)}m})$, and set
    \[
    I_t:= \E\left[ \left( \sum_{j=0}^{\nt-1} Y_j(\phi) \big(\Delta_j^nB^H\big)^r \right)^2 \right] .
    \]
       From Lemma \ref{lem21} we obtain
    \begin{eqnarray*}
I_t&=&  \sum_{u,v=0}^{\left\lfloor\frac{r}{2}\right\rfloor} C_{r,u}C_{r,v} \,n^{-2H(u+v)} \sum_{i,j=0}^{\nt-1} \E\Big[ Y_i(\phi) Y_j(\phi) \, I_{r-2u}(\partial_{\frac in}^{\otimes r-2u}) \, I_{r-2v}(\partial_{\frac jn}^{\otimes r-2v}) \Big] \\
      & \leq & C \sum_{u,v=0}^{\left\lfloor\frac{r}{2}\right\rfloor}   \,n^{-2H(u+v)} \sum_{i,j=0}^{\nT-1} \Big| \E\Big[ Y_i(\phi) Y_j(\phi) \, I_{r-2u}(\partial_{\frac in}^{\otimes r-2u}) \,I_{r-2v}(\partial_{\frac jn}^{\otimes r-2v}) \Big] \Big|.
      \end{eqnarray*}
      Then we apply  the product formuila (\ref{prod}) in order to develop the product of two multiple divergence operators and we end up with
      \begin{eqnarray*}
    I_t &\le &       C \sum_{u,v=0}^{\left\lfloor\frac{r}{2}\right\rfloor}
      \sum_{z=0}^{(r-2u)\wedge(r-2v)}
      n^{-2H(u+v)} \sum_{i,j=0}^{\nT-1} \Big| \E\Big[ Y_i(\phi) Y_j(\phi) \\
      && \times  I_{2r-2(u+v)-2z} \big(\partial_{\frac in }^{\otimes r-2u-z} \widetilde{\otimes} \partial_{\frac jn}^{\otimes r-2v-z}\big) \big\langle\partial_{\frac in},\partial_{\frac jn}\big\rangle_\mathfrak{H}^z \Big] \Big| \\
      & = & C \sum_{u,v=0}^{\left\lfloor\frac{r}{2}\right\rfloor} n^{-2H(u+v)} \sum_{i,j=0}^{\nT-1} \Big| \E\Big[ Y_i (\phi)Y_j(\phi) \, I_{2r-2(u+v)} \big(\partial_{\frac in}^{\otimes r-2u} \widetilde{\otimes} \partial_{\frac jn }^{\otimes r-2v}\big) \Big] \Big| \\
      &&+C
      \sum_{u,v=0}^{\left\lfloor\frac{r}{2}\right\rfloor} \sum_{z=1}^{(r-2u)\wedge(r-2v)}
n^{-2H(u+v)} \sum_{i,j=0}^{\nT -1} \Big| \E\Big[ Y_i(\phi) Y_j (\phi)\\
      &&   \times \, I_{2r-2(u+v)-2z} \big(\partial_{\frac im }^{\otimes r-2u-z} \widetilde{\otimes} \partial_{\frac jn }^{\otimes r-2v-z}\big) \big\langle\partial_{\frac in},\partial_{\frac jn }\big\rangle_\mathfrak{H}^z \Big] \Big| \\
      & =:&C( D_1+D_2).
    \end{eqnarray*}

    We first study term $D_2$, that is when $z\geq1$. On one hand, from the estimate (\ref{j5}), we get
    \begin{eqnarray}  \notag
      \Big\|  I_{2r-2(u+v)-2z} \big(\partial_{\frac in}^{\otimes r-2u-z} \widetilde{\otimes} \partial_{\frac jn }^{\otimes r-2v-z}\big) \Big\|_{L^{q/(q-2)}(\Omega)} &\leq& C \big( \|\partial_{\frac in}\|_\mathfrak{H}^{r-2u-z} \,\|\partial_{\frac jn}\|_\mathfrak{H}^{r-2v-z} \big) \\ \notag
      &=& C \,\|\partial_{\frac 1n}\|_\mathfrak{H}^{2r-2(u+v)-2z} \\  \label{eq5a}
      &=& C \,n^{-2H(r-u-v-z)}.
    \end{eqnarray}
    On the other hand,  using (\ref{eq4a}), we obtain
        \begin{equation} \label{eq6a}
          \sum_{i,j=0}^{\nT-1} \Big| \big\langle\partial_{\frac in},\partial_{\frac jn}\big\rangle_\mathfrak{H}^z \Big| \leq C \,n^{1-2zH}.
          \end{equation}
    Thus, from (\ref{eq5a}) and (\ref{eq6a}) and using H\"older's inequality, we deduce that the term $D_2$ is bounded by
    \begin{eqnarray*}
      D_2 &\leq&
       C\sum_{u,v=0}^{\left\lfloor\frac{r}{2}\right\rfloor} \sum_{z=1}^{(r-2u)\wedge(r-2v)}
       \,n^{-2H(u+v)} \sup_{0\leq j\leq \nT -1} \|Y_j(\phi)\|_{L^q(\Omega)}^2 \,n^{-2H(r-u-v-z)} \,n^{1-2zH} \\
      &\leq& C \sup_{0\leq j\leq \nT -1} \|Y_j(\phi)\|_{L^q(\Omega)}^2 \,n^{1-2rH}.
    \end{eqnarray*}

    Now, let us study term $D_1$, that is when $z=0$.
    By (\ref{dual}) we have
    \begin{eqnarray*}
      && \Big| \E\Big[ Y_i(\phi) Y_j(\phi) \, I_{2(r-u-v)} \big(\partial_{\frac in}^{\otimes r-2u} \widetilde{\otimes} \partial_{\frac jn }^{\otimes r-2v}\big) \Big] \Big| \\
      &&  \qquad  =\left | \E\Big[ \big\langle D^{2(r-u-v)} (Y_i (\phi)Y_j(\phi)), \partial_{\frac in}^{\otimes r-2u} \widetilde{\otimes} \partial_{\frac jn}^{\otimes r-2v} \big\rangle_{\mathfrak{H}^{\otimes 2(r-u-v)}} \Big] \right|.
    \end{eqnarray*}
 Write $s=2(r-u-v)$. By definition of Malliavin derivative and Leibniz rule, $D^{s}_{u_1,\ldots,u_s} (Y_i(\phi) Y_j(\phi))$ consists of terms of the form
    $D_{{\bf u}_J}^{|J|}(Y_i(\phi))D_{{\bf u}_{J^c}}^{s-|J|}(Y_j(\phi))$, where $J$ is a subset of $\{1,\ldots,s\}$, $|J|$ denotes the cardinality of $J$ and ${\bf u}_J=(u_i)_{i\in J}$.
    Without loss of generality, we may fix $J$ and assume that $a=|J|\geq1$.
     By our assumptions on $\phi$ and the definition of Malliavin derivative, we know that
    \begin{eqnarray*}
     D^a(Y_i(\phi) )&=& \phi^{(a)}(\widetilde{B}^H_{\frac in })\widetilde{\varepsilon}_{\frac in}^{\otimes a} - \phi^{(a)}(B^H_{\frac {k(i)}m})\varepsilon_{\frac {k(i)}m}^{\otimes a}
      =  Y_i(\phi^{(a)})  \varepsilon_{\frac {k(i)}m}^{\otimes a} +  \phi^{(a)}(\widetilde{B}^H_{\frac in }) \big( \widetilde{\varepsilon}_{\frac in }^{\otimes a}- \varepsilon_{\frac {k(i)}m}^{\otimes a} \big),
    \end{eqnarray*}
    where recall that $k=k(i)=\sup\{ j: \frac jm \le \frac in \}$,
    and, for each $a\leq 2r$, we have $D^a(Y_i(\phi))\in L^2(\Omega;\mathfrak{H}^{\otimes a})$. Setting $b=s-|J|=s-a$ and with a slight abuse of notation, it follows that
    \begin{eqnarray*}
      && \E\Big[ \big\langle D_{{\bf u}_J}^{a}(Y_i(\phi))D_{{\bf u}_{J^c}}^{b}(Y_j(\phi)), \partial_{\frac in}^{\otimes r-2u} \otimes \partial_{\frac jn}^{\otimes r-2v} \big\rangle_{\mathfrak{H}^{\otimes 2r-2(u+v)}} \Big] \\
      &\leq&  \|Y_i(\phi^{(a)})\|_{L^2(\Omega)} \|Y_j(\phi^{(b)})\|_{L^2(\Omega)}
      \Big|\big\langle  \varepsilon_{\frac {k(i)} m}^{\otimes a} ({\bf u}_J) \otimes\varepsilon_{\frac {k(j)}m}^{\otimes b}({\bf u}_{J^c}) ,\partial_{\frac in}^{\otimes r-2u} \otimes \partial_{\frac jn }^{\otimes r-2v}  \big\rangle_\mathfrak{H^{\otimes s}}\Big| \\
      &&+   \|Y_i(\phi^{(a)})\|_{L^2(\Omega)} \|\phi^{(b)}(\widetilde{B}^H_{\frac jn })\|_{L^2(\Omega)}  \\
      &&\qquad \times
 \Big|\big\langle \varepsilon_{\frac {k(i)}m }^{\otimes a} ({\bf u}_J)\otimes \big(\widetilde{\varepsilon}_{\frac jn }^{\otimes b} -\varepsilon_{\frac {k(j)}m}^{\otimes b}\big)({\bf u}_{J^c}) ,\partial_{\frac in}^{\otimes r-2u} \otimes \partial_{\frac jn}^{\otimes r-2v}  \big\rangle_\mathfrak{H^{\otimes s}}\Big|
     \\
      &&+  \|\phi^{(a)}(\widetilde{B}^H_{\frac im})\|_{L^2(\Omega)}  \|Y_j(\phi^{(b)})\|_{L^2(\Omega)}  \\
      &&\qquad\times
    \Big|\big\langle
    \big(\widetilde{\varepsilon}_{\frac im}^{\otimes a}- \varepsilon_{\frac {k(i)}n}^{\otimes a}\big)({\bf u}_{J}) \otimes
    \varepsilon_{\frac {k(j)} n}^{\otimes b} ({\bf u}_{J^c})  ,\partial_{\frac im }^{\otimes r-2u} \otimes \partial_{\frac jm}^{\otimes r-2v}  \big\rangle_\mathfrak{H^{\otimes s}}\Big|  \\
      &&+  \|\phi^{(a)}(\widetilde{B}^H_{\frac in})\|_{L^2(\Omega)} \|\phi^{(b)}(\widetilde{B}^H_{\frac jn})\|_{L^2(\Omega)} \\
&&\qquad  \times \Big|\big\langle
    \big(\widetilde{\varepsilon}_{\frac in }^{\otimes a}-\varepsilon_{\frac {k(i)}m}^{\otimes a}\big)({\bf u}_{J}) \otimes
   \big(\widetilde{\varepsilon}_{\frac jn}^{\otimes b}-\varepsilon_{\frac {k(j)} m}^{\otimes b}\big)({\bf u}_{J^c})   ,\partial_{\frac in}^{\otimes r-2u} \otimes \partial_{\frac jn}^{\otimes r-2v}  \big\rangle_\mathfrak{H^{\otimes s}}\Big|  \\
      &=:& D_{11}+D_{12}+D_{13}+D_{14}.
    \end{eqnarray*}
   Consider first the term $D_{11}$. By (\ref{prop_fBM2}), we have either
        \[
         D_{11} \leq C \Big|\big\langle  \varepsilon_{\frac {k(i)}m} ,\partial_{\frac jn } \big\rangle_\mathfrak{H}\Big|\,\,n^{-2H(a+b-1)}\,\,\sup_{0\leq w\leq 2r}\sup_{0\leq j\leq \nT -1}  \|Y_j(\phi^{(w)})\|_{L^2(\Omega)}^2
         \]
        or
        \[
         D_{11} \leq C \Big|\big\langle  \varepsilon_{\frac {k(i)}m},\partial_{\frac in} \big\rangle_\mathfrak{H}\Big| \,\,n^{-2H(a+b-1)}\,\,\sup_{0\leq w\leq 2r}\sup_{0\leq j\leq \nT -1} \|Y_j(\phi^{(w)})\|_{L^2(\Omega)}^2.
         \]
    By Lemma \ref{lemma_fBM}.a
        \begin{equation} \label{eq7a}
         \sum_{j=0}^{\nT -1} \Big|\big\langle  \varepsilon_{\frac {k(i)}m},\partial_{\frac jn } \big\rangle_\mathfrak{H}\Big| \leq C
        \end{equation}
        and by (\ref{2.6}),
      \begin{equation} \label{eq8a}
        \sum_{i=0}^{\nT-1} \sum_{j=0}^{\nT-1} \Big|\big\langle \varepsilon_{\frac {k(i)}m},\partial_{\frac in} \big\rangle_\mathfrak{H}\Big| \Big|\big\langle \varepsilon_{\frac {k(j)}m},\partial_{\frac jn} \big\rangle_\mathfrak{H}\Big| \leq C {\color{black}{ m^{2-4H} }}.
        \end{equation}
    As a consequence, inequalities (\ref{eq7a}) and (\ref{eq8a}) imply
    \begin{eqnarray*}
      && \sum_{u,v=0}^{\left\lfloor\frac{r}{2}\right\rfloor} \,n^{-2H(u+v)} \sum_{i,j=0}^{\nT -1} D_{11} \\
      && \hspace{10mm} \leq C \,\,\sup_{0\leq w\leq 2r}\sup_{0\leq j\leq \nT-1} \|Y_j(\phi^{(w)})\|_{L^2(\Omega)}^2\,\, \sum_{u,v=0}^{\left\lfloor\frac{r}{2}\right\rfloor} \sum_{i=0}^{\nT-1} \,n^{-2H(u+v+a+b-1)} \\
      && \hspace{20mm} +\, C \,\,\sup_{0\leq w\leq 2r}\sup_{0\leq j\leq \nT-1} \|Y_j(\phi^{(w)})\|_{L^2(\Omega)}^2\,\, \sum_{u,v=0}^{\left\lfloor\frac{r}{2}\right\rfloor} \,n^{-2H(u+v+a+b-1)} \,n^{2H} \,m^{2-4H} \\
      && \hspace{10mm} \leq C \,\,\sup_{0\leq w\leq 2r}\sup_{0\leq j\leq \nT-1} \|Y_j(\phi^{(w)})\|^2_{L^2(\Omega)}\, \,\left( 1+ n^{1-2H} m^{2-4H} \right)n^{1-2rH},    \end{eqnarray*}
    where we used that $u+v+a+b-1=2r-(u+v)-1\geq r$, since $u+v+1\leq 2\left\lfloor\frac{r}{2}\right\rfloor+1=r$ for any odd integer $r$.

    We apply the same calculation to $D_{12}$ and $D_{13}$, and we similarly obtain that
    \begin{eqnarray*}
      && \sum_{u,v=0}^{\left\lfloor\frac{r}{2}\right\rfloor} \,n^{-2H(u+v)} \sum_{i,j=0}^{\nT-1} (D_{12} +D_{13}) \\
      && \hspace{10mm} \leq  C \,\,\sup_{0\leq w\leq 2r}\sup_{0\leq j\leq \nT-1} \|\phi^{(w)}(\widetilde{B}_{\frac jn })\|_{L^2(\Omega)} \sup_{0\leq j\leq \nT-1} \|Y_{j}(\phi^{(w)})\|_{L^2(\Omega)} \\
      && \hspace{25mm} \times (1 + n^{1-2H} \,m^{2-4H}) \,n^{1-2rH}.
    \end{eqnarray*}

    Now we study term $D_{14}$. Inequalities (\ref{2.7}) and (\ref{2.8}) state that
        $$ \sum_{j=0}^{\nT-1} \Big|\big\langle \widetilde{\varepsilon}_{\frac in}-\varepsilon_{\frac {k(i)}m},\partial_{\frac jn} \big\rangle_\mathfrak{H}\Big| \leq C m^{-2H}\quad\mbox{and}\quad
      \sum_{i=0}^{\nT-1} \Big|\big\langle \widetilde{\varepsilon}_{\frac in }-\varepsilon_{\frac {k(i)}m},\partial_{\frac in} \big\rangle_\mathfrak{H}\Big| \leq C m^{1-2H}.   $$
    Then, with the same reasoning used for $D_{11}$, we obtain
    \begin{eqnarray*}
       && \sum_{u,v=0}^{\left\lfloor\frac{r}{2}\right\rfloor}  \,n^{-2H(u+v)} \sum_{i,j=0}^{\nT-1} D_{14} \\
       && \hspace{10mm} \leq C \sup_{0\leq w\leq 2r} \sup_{0\leq j\leq \nT-1} \|\phi^{(w)}(\widetilde{B}^H_{\frac jn})\|_{L^2(\Omega)}^2 (m^{-2H} + \,n^{1-2H} \,m^{2-4H}) \,n^{1-2rH}.
    \end{eqnarray*}
    The proof is now concluded.
    \end{proof}

\bigskip

\end{document}